\newtheorem{theorem}{Theorem}[section]
\newtheorem{corollary}[theorem]{Corollary}
\newtheorem{lemma}[theorem]{Lemma}
\newtheorem{proposition}[theorem]{Proposition}
\theoremstyle{definition}
\newtheorem{definition}[theorem]{Definition}
\newtheorem{remark}[theorem]{Remark}
\newcommand\Rt{\operatorname{\mathbf R}}	
\newcommand\Ft{\operatorname {\mathbf{Tr}}}	
\newcommand\ft{\operatorname{\mathbf{tr}}}	
\newcommand\RE{\operatorname{RE}}		
\numberwithin{equation}{section}
\begin{document}

\title{Transitive Closure and Transitive Reduction in Bidirected Graphs}

\author{Ouahiba  Bessouf, Alger}
\author{Abdelkader  Khelladi, Alger} 
\author{Thomas  Zaslavsky, Binghamton}

\begin{abstract}
   In a bidirected graph an edge has a direction at each end, so bidirected graphs generalize directed graphs. We generalize the definitions of transitive closure and transitive reduction from directed graphs to bidirected graphs by introducing new notions of bipath and bicircuit that generalize directed paths and cycles. We show how transitive reduction is related to transitive closure and to the matroids of the signed graph corresponding to the bidirected graph.
\end{abstract}

\keywords{bidirected graph, signed graph, matroid, transitive closure, transitive reduction}

\subjclass[2010]{Primary 05C22, Secondary 05C20, 05C38}

\maketitle

\pagestyle{myheadings}
\markright{\textsc{Transitive Closure and Transitive Reduction in Bidirected Graphs}}\markleft{\textsc{Bessouf, Khelladi, and Zaslavsky}}


\section{Introduction.}

Bidirected graphs are a generalization of undirected and directed graphs. Harary defined
in 1954 the notion of signed graph. For any bidirected graph,
we can associate a signed graph of which the bidirected graph is an orientation. Reciprocally, any signed graph
can be associated to a bidirected graph in multiple ways, just as a graph can be associated to a directed graph. 

Transitive closure is well known.  Transitive reduction in directed graphs was introduced by Aho, Garey, and Ullman \cite{agu}. 
The aim of this paper is to extend the concepts of transitive closure, 
which is denoted by $\Ft(G_{\tau})$, and transitive reduction, which is denoted by $\Rt(G_{\tau})$, to
bidirected graphs. We seek to find definitions of transitive closure and transitive reduction 
for bidirected graphs through which the classical concepts would be a special case. 
We establish for bidirected graphs some properties of these concepts and a 
duality relationship between transitive closure and transitive reduction.

\section{Bidirected Graphs}

We allow graphs to have loops and multiple edges.  Given an undirected graph $G = (V,E)$, the set of \emph{half-edges}
of $G$ is the set {$\Phi(G)$} defined as follows:
$$\Phi(G)= \{(e, x)\in E \times V : e \text{ is incident with } x \}.$$
Thus, each edge $e$ with ends $x$ and $y$ is represented by its two half-edges $(e, x)$ and $(e, y)$.  
For a loop the notation does not distinguish between its two half-edges.   There is no very good notation for the two half-edges of a loop, but we believe the reader will be able to interpret our formulas for loops.

A \emph{chain} (or \emph{walk}) is a sequence of vertices and edges, $x_0, e_1, x_1, \ldots, e_k, x_k$, such that $k\geq0$ and $x_{i-1}$ and $x_i$ are the ends of $e_i$ $\forall\ i=1,\ldots,k$.  It is \emph{elementary} (or a \emph{path}) if it does not repeat any vertices or edges.  It is \emph{closed} if $x_0=x_k$ and $k>0$.  A \emph{partial graph} of a graph is also known as a \emph{spanning subgraph}, i.e., it is a subgraph that contains all vertices.  The terminology is due to Berge \cite{berge}.

\subsection{Basic Properties of Bidirected Graphs}

\begin{definition}\label{1}
{
A \emph{biorientation} of $G$ is a signature of its half-edges:
$${\tau :\Phi(G)\rightarrow\{- 1, +1\}}.$$
It is agreed that $\tau(e, x) = 0$ if $(e, x)$ is not a half-edge of $G$; that makes it possible to extend $\tau$ to
all of $E\times V$, which we will do henceforth.

A \emph{bidirected graph} is a graph provided with a biorientation; it is written $G_{\tau} = (V, E; \tau)$.
}
\end{definition}

\begin{definition}\label{1a}
{
An edge $e=\{x,y\}$ in a bidirected graph is notated $e=\{x^\alpha,y^\beta\}$ if $\tau(e,x)=\alpha$ and $\tau(e,y)=\beta$.  Two edges $e,f$ both notated $\{x^\alpha,y^\beta\}$ are called \emph{parallel}.
}
\end{definition}

\setlength{\unitlength}{.8mm}

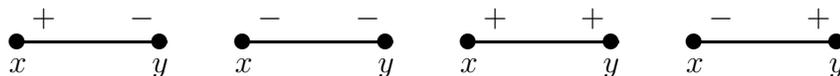
\begin{figure}[h]
\begin{center}
\begin{picture}(110,10)(0,7)
\put(0,10){\circle*{2}}
\put(19,10){\circle*{2}}
\put(0,10){\line(1,0){20}}
\put(2,12){$+$}
\put(15,12){$-$}
\put(-1,6){$x$}
\put(18,6){$y$}
\put(30,10){\circle*{2}}
\put(49,10){\circle*{2}}
\put(30,10){\line(1,0){20}}
\put(32,12){$-$}
\put(45,12){$-$}
\put(29,6){$x$}
\put(48,6){$y$}
\put(60,10){\circle*{2}}
\put(79,10){\circle*{2}}
\put(60,10){\line(1,0){20}}
\put(62,12){$+$}
\put(75,12){$+$}
\put(59,6){$x$}
\put(78,6){$y$}
\put(90,10){\circle*{2}}
\put(109,10){\circle*{2}}
\put(90,10){\line(1,0){20}}
\put(92,12){$-$}
\put(105,12){$+$}
\put(89,6){$x$}
\put(108,6){$y$}
\end{picture}
\end{center}
\caption{The four possible biorientations of an edge $\{x, y\}$ of $G_{\tau}$.}\label{FIG1}
\end{figure}

Each edge (including a loop) has four possible biorientations (figure \ref{FIG1}); therefore, the number of biorientations of $G$ is $4^{|E|}$.  

\begin{definition}\label{1ssi}
{\rm
We define two subsets of $V$:
\begin{align*}
V_{+1} &= \{x \in G_{\tau} : \tau (e, x) = +1, \ \forall (e, x) \in \Phi_{x}\} \text{ is the set of \emph{source} vertices},\\
V_{-1} &= \{x \in G_{\tau} : \tau (e, x) = -1, \ \forall (e, x) \in \Phi_{x}\} \text{ is the set of \emph{sink} vertices},
\end{align*}
where $\Phi_{x}$ is the set of all half-edges incident with $x$.  (Note that this is the opposite convention for arrows to that in \cite{zo}.)
}
\end{definition}

We observe that $V_{+1}\cap V_{-1}$ is the set of vertices that are not an end of any edge (\emph{isolated} vertices).

\begin{definition}\label{2} {\rm\cite{b}}
{\rm
Let $G_{\tau} = (V, E; \tau)$ be a bidirected graph. Then $W$ (resp.,
$\overline{W}$) is a function defined on $V$ (resp., $E$) as follows:
\begin{center}
$W  : V \rightarrow \mathbb Z,$
\hspace{1cm} $ x \mapsto W(x) =\sum_{e\in E} \tau(e,x), $
\end{center}
\begin{center}
$\overline{W} : E  \rightarrow   \{- 2, 0, 2\},$ \hspace{1cm} $ e
\mapsto \overline {W}(e) = \sum_{x\in V}\tau(e, x).$
\end{center}
Thus, for a vertex $x$, $W(x)$ is the number of positive half-edges incident with $x$, minus the number of negative half-edges incident with $x$.
}
\end{definition}

\begin{definition}{\rm\cite{hn}}\label{3}
{\rm
A \emph{signed graph} is a triple $(V, E; \sigma)$ where $G = (V, E)$
is an undirected graph and $\sigma$ is  a signature of the edge set $E$:
\begin{center}
$\sigma : E  \rightarrow \{ -1, +1\}$.
\end{center}
 A signed graph is written $G_{\sigma} = (V, E; \sigma)$.
 }
\end{definition}

\begin{definition}\label{10}{\rm\cite{b}}
{\rm
Let $G_{\sigma}=(V, E; \sigma )$ be a signed graph and $P$ a chain (not necessarily elementary)
connecting $x$ and $y$  in $G_{\sigma}$:
$$P:x,e_{1},x_{1},e_{2},x_{2},\ldots,y,$$
where $x,x_{1},\ldots,y$ are vertices  and $e_{1},e_{2},\ldots$ are  edges of $G$.
We put $$\sigma(P)=\displaystyle{\prod_{e_{i}\in P}\sigma(e_{i})}.$$
We write $P^{\alpha}$ instead of $P$, if $\alpha=\sigma(P)$.
$P^{\alpha}$ is called a \emph{signed chain} of sign $\alpha$ connecting $x$ and $y$.
A signed chain is \emph{minimal} if it contains no signed chain with the same ends and the same sign.
See figure \ref{FIG3}.

\begin{figure}[htbp]
  \begin{center}
  \includegraphics[scale=1]{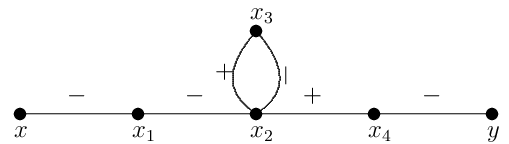}\\
$P^{+}:x,x_{1},x_{2},x_{3},x_{2},x_{4},y$ is a positive signed chain.\\
$P^{-}:x,x_{1},x_{2},x_{4},y$ is a negative signed chain.\\
  \end{center}
\caption{$P^{+}$ contains $P^{-}$ as a subchain, but both $P^{+}$ and $P^{-}$ are minimal signed chains from $x$ to $y$ because their signs differ.}\label{FIG3}
\end{figure}

Especially, a cycle in a signed graph is \emph{positive} if the number of its negative edges is even.  In the opposite case, it is \emph{negative}.
}
\end{definition}

\begin{definition}\label{5}
{\rm
A signed graph is \emph{balanced} if all its cycles are positive \cite{hn}.  

A signed graph is \emph{antibalanced} if, by negating the signs of all edges, it becomes balanced \cite{hd}.
}
\end{definition}

It follows from the definitions that a cycle is balanced if, and only if, it is positive.

\begin{lemma}\label{anti}
A signed graph is antibalanced if, and only if, every positive cycle has even length and every negative cycle has odd length.
\end{lemma}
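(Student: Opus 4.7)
The plan is to unwind the definitions and reduce the claim to a one-line sign computation. By Definition~\ref{5}, $G_\sigma$ is antibalanced precisely when the signed graph $G_{-\sigma}$, obtained by reversing the sign of every edge, is balanced; and by the balance definition together with the observation that a cycle is balanced iff it is positive, $G_{-\sigma}$ is balanced iff every cycle of $G$ is positive under the signature $-\sigma$. So the whole task is to express the sign of a cycle under $-\sigma$ in terms of its sign under $\sigma$ and its length.

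Next I would fix an arbitrary cycle $C$ of length $\ell$ in $G$ and compute, using Definition~\ref{10} applied to the closed chain traversing $C$,
\[
(-\sigma)(C) \;=\; \prod_{e \in C}\bigl(-\sigma(e)\bigr) \;=\; (-1)^{\ell}\,\sigma(C).
\]
From this identity, $(-\sigma)(C) = +1$ is equivalent to $\sigma(C) = (-1)^{\ell}$, which says precisely that either $\ell$ is even and $C$ is positive, or $\ell$ is odd and $C$ is negative.

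Finally I would quantify over all cycles: every cycle of $G$ is positive under $-\sigma$ iff for every cycle $C$ the parity of $\ell(C)$ matches the sign of $C$ in the sense above, which is exactly the statement that every positive cycle of $G_\sigma$ has even length and every negative cycle has odd length. Combining this equivalence with the reformulation of antibalance in the first paragraph gives the lemma in both directions simultaneously.

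There is no real obstacle here; the only point that requires a bit of care is making sure the sign of a cycle is defined via the product of its edge signs (Definition~\ref{10}), so that the factor $(-1)^\ell$ from negating each of the $\ell$ edges comes out cleanly. Everything else is an immediate chain of ``iff''s.
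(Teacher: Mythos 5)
Your proof is correct and follows essentially the same route as the paper's: both reduce antibalance to the balance of $G_{-\sigma}$ and use the identity $(-\sigma)(C)=(-1)^{\ell}\sigma(C)$ to translate ``all cycles positive under $-\sigma$'' into the stated parity condition. Your version just makes the sign computation slightly more explicit.
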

\begin{proof}
Let $G_\sigma$ be a signed graph.  It is antibalanced if, and only if, $G_{-\sigma}$ has only positive cycles.  The sign of a cycle is the same in $G_\sigma$ and $G_{-\sigma}$ if the cycle has even length and is the opposite if the cycle has odd length.  Thus, $G _{-\sigma}$ is balanced if, and only if, every even cycle in $G_\sigma$ is positive and every odd cycle in $G_\sigma$ is negative.
\end{proof}

\begin{definition}\label{6}{\rm\cite{zo}}
{\rm
For a biorientation $\tau$ of a graph $G_{\tau}  = (V, E; \tau)$, we
define a signature $\sigma$ of $E$, for an edge $e$ with ends $x$ and $y$, by:
$$\sigma(e) = - \tau(e, x)\tau(e,y).$$
}
\end{definition}

\begin{definition}\label{7}
{\rm
A signed or bidirected graph is \emph{all positive} (resp., \emph{all negative}) if all its edges are positive (resp., negative); i.e. in a bidirected graph, for every edge $e$, $\overline{W}(e) = 0$ (resp., $\neq0$).
}
\end{definition}

We observe that a bidirected graph that is all positive is a usual directed graph.

Each bidirected graph determines a unique signature.  However, the number of biorientations of a signed graph is $2^{|E|}$ because each edge has two possible biorientations.

\begin{definition}\label{8} {\rm\cite{k,zo}}
{\rm
Let $G_{\tau}  = (V, E; \tau)$ be a bidirected graph and let $X$ be a set of vertices of $G$.  
A new biorientation $\tau_{X}$  of $G$ is defined as follows:
\begin{align*}
\tau_{x}(e, x) &= - \tau(e, x), \ \forall\ x \in X,\\[6pt]
\tau_{x}(e, y) &= \tau(e, y), \ \forall\ y \in V - X,
\end{align*}
 for any edge $e \in E$, where $x$ and $y$ are the ends of the edge $e$.  We say that the biorientation $\tau_{X}$  and the bidirected graph $G_{\tau_{X}}$   are obtained respectively from $\tau$ and $G_{\tau}$ by \emph{switching} $X$.  If $X = \{x\}$ where $x\in V$, we write $\tau_{x}$ for simplicity.
The definition of switching a signed graph is similar.  Let $G_\sigma$ be a signed graph and $X \subseteq V$.  The sign function $\sigma$ \emph{switched by $X$} is $\sigma_X$ defined as follows:
\begin{gather*}
\sigma_X(e) = \begin{cases}
\sigma(e),	&\text{ if } x,y\in X \text{ or } x,y \in V-X, \\
-\sigma(e), &\text{ otherwise.}
\end{cases}
\end{gather*}
}
\end{definition}

We note that switching $X$ is a self-inverse operation.  It also follows from the definitions that the following result holds:

\begin{proposition}\label{ssw}
 Let $G_\tau$ be a bidirected graph and $\sigma$ the signature determined by $\tau$.   Let $X \subseteq V$.  Then $\tau_X$ determines the signature $\sigma_X$.
 \end{proposition}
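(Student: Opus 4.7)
The plan is a direct case analysis on each edge $e$ according to where its endpoints sit relative to $X$. By Definition~\ref{6}, the signature determined by $\tau_X$ is $\sigma^{\tau_X}(e) = -\tau_X(e,x)\tau_X(e,y)$, where $x,y$ are the ends of $e$. My goal is to show $\sigma^{\tau_X}(e)=\sigma_X(e)$ for every edge, which would establish the proposition.

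First, I would fix an arbitrary edge $e$ with ends $x$ and $y$ and split into three cases: (i) $x,y\in X$; (ii) $x,y\in V-X$; (iii) exactly one of $x,y$ lies in $X$. In cases (i) and (ii), the definition of $\tau_X$ in Definition~\ref{8} multiplies both half-edge signs either by $-1$ twice or by $+1$ twice, so the product $\tau_X(e,x)\tau_X(e,y)$ equals $\tau(e,x)\tau(e,y)$ unchanged; hence $\sigma^{\tau_X}(e)=\sigma(e)$, which is exactly the value $\sigma_X(e)$ assigned in Definition~\ref{8}. In case (iii), exactly one half-edge sign is negated, so $\tau_X(e,x)\tau_X(e,y)=-\tau(e,x)\tau(e,y)$ and therefore $\sigma^{\tau_X}(e)=-\sigma(e)=\sigma_X(e)$, again matching Definition~\ref{8}.

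The only mild subtlety is loops: if $e$ is a loop at a vertex $z$, both its half-edges are incident to $z$, so $z\in X$ forces both half-edge signs to be negated simultaneously (case (i)) and $z\notin X$ leaves both unchanged (case (ii)); case (iii) cannot occur. Since $\sigma_X$ also leaves loops unchanged (both ends are in $X$ or both in $V-X$), the loop case is consistent with the general argument and needs no separate treatment beyond this remark. I do not foresee any genuine obstacle; the proposition really is a straightforward unpacking of Definitions~\ref{6} and~\ref{8}, and writing out the three cases is the whole proof.
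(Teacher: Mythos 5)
Your case analysis is correct and is precisely the "follows from the definitions" computation that the paper itself merely asserts without writing out (it states the proposition immediately after the definitions with no displayed proof). Your handling of the loop case is a sensible extra precaution, and nothing in your argument diverges from the intended route.
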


\setlength{\unitlength}{.5mm}

\vspace*{0.5cm}
\begin{figure}[h]
\begin{center}
\begin{picture}(30,30)(0,0)
\multiput (0,0)(30,0){2}{\circle*{3}}
\multiput (0,30)(30,0){2}{\circle*{3}}
\multiput(0,0)(0,30){2}{\line(1,0){30}}
\multiput(0,0)(30,0){2}{\line(0,1){30}}
\put(2,-6.75){$+$}
\put(-9,4){$+$}
\put(32,4){$+$}
\multiput(2,31)(19,0){2}{$-$}
\put(20,-6.75){$-$}
\multiput(-7.5,20)(39,0){2}{\rotatebox{90}{$-$}}
\put(-10,-7){$d$}
\put(35,-6){$c$}
\put(-8,32){$a$}
\put(35,32){$b$}
\end{picture}
\hspace*{1cm}
\begin{picture}(20,20)(0,0)
\put(0,17){\vector(1,0){40}}
\put(0,21){\small{by switching}}
\put(10,9){\small{\{$b, c$\}}}
\end{picture}
\hspace*{2cm}
\begin{picture}(30,30)(0,0)
\multiput (0,0)(30,0){2}{\circle*{3}}
\multiput (0,30)(30,0){2}{\circle*{3}}
\multiput(0,0)(0,30){2}{\line(1,0){30}}
\multiput(0,0)(30,0){2}{\line(0,1){30}}
\put(2,-6.75){$+$}
\put(-9,4){$+$}
\put(32,22){$+$}
\put(20,-6.75){$+$}
\put(2,31){$-$}
\put(20,32){$+$}
\put(-7.5,20){\rotatebox{90}{$-$}}
\put(32,2){\rotatebox{90}{$-$}}
\put(-10,-7){$d$}
\put(35,-6){$c$}
\put(-8,32){$a$}
\put(35,32){$b$}
\end{picture}
\end{center}
\vspace{.2cm}
\caption{Example of switching a bidirected graph.}
\end{figure}
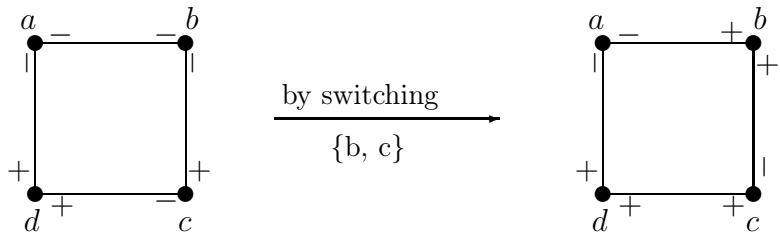

\begin{proposition}\label{9}
\begin{enumerate}[{\rm(i)}]
\item {\rm\cite{zs}} The result of switching a balanced signed graph is balanced.
\item {\rm\cite{hn,zs}} A signed graph is balanced if, and only if, there is a subset $X$ of vertices such that switching $X$ produces a signed graph in which all edges are positive.
\item {\rm\cite{hd,zs}} A signed graph is antibalanced if, and only if, there is a subset $X$ of vertices such that switching $X$ produces a signed graph in which all edges are negative.
\end{enumerate}
\end{proposition}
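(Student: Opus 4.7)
The plan is to prove the three parts in order, using (i) to establish the easy direction of (ii) and then reducing (iii) to (ii) by global negation.

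For (i), I would show the stronger fact that switching preserves the sign of every cycle. Consider any cycle $C$ and any switching set $X$. Each vertex $v \in X$ that lies on $C$ is incident to exactly two edges of $C$ (unless $v$ is not on $C$, in which case no edges of $C$ have their sign changed). Both of those cycle-edges have their signs negated, contributing a factor of $(-1)^2 = +1$ to the product $\sigma(C) = \prod_{e\in C}\sigma(e)$. Hence $\sigma_X(C) = \sigma(C)$ for every cycle $C$, and if every cycle was positive before switching, every cycle is positive afterward.

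For (ii), the ``if'' direction follows from (i), since an all-positive signed graph has every cycle positive, hence balanced, and balance is preserved under switching (so the unswitched graph is also balanced). For the ``only if'' direction, assume $G_\sigma$ is balanced, and without loss of generality assume $G$ is connected (otherwise work in each connected component separately). Choose a spanning tree $T$ and a root $r\in V$. For each vertex $v$, let $\mu(v)$ be the product of $\sigma(e)$ over the edges $e$ of the unique $r$--$v$ path in $T$, with $\mu(r)=+1$; set $X=\{v\in V : \mu(v)=-1\}$. Writing $\zeta(v)=+1$ if $v\notin X$ and $\zeta(v)=-1$ if $v\in X$, the switched sign of any edge $e=uv$ is $\sigma_X(e)=\zeta(u)\,\sigma(e)\,\zeta(v)$. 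For a tree edge from a parent $u$ to a child $v$ we have $\mu(v)=\mu(u)\sigma(e)$, so $\zeta(u)\zeta(v)=\mu(u)\mu(v)=\sigma(e)$ and thus $\sigma_X(e)=+1$. For a non-tree edge $e=uv$, the tree path from $u$ to $v$ together with $e$ forms a cycle, which is positive by the balance hypothesis; the product of signs along that tree path is $\mu(u)\mu(v)=\zeta(u)\zeta(v)$, so $\sigma(e)\zeta(u)\zeta(v)=+1$, i.e., $\sigma_X(e)=+1$. Hence $\sigma_X$ is all positive.

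For (iii), the key observation is that switching commutes with global negation of the sign function: from the formula $\sigma_X(e)=\zeta(u)\sigma(e)\zeta(v)$ one sees immediately that $(-\sigma)_X=-(\sigma_X)$. By Definition \ref{5}, $G_\sigma$ is antibalanced precisely when $G_{-\sigma}$ is balanced, which by part (ii) holds precisely when there is a subset $X\subseteq V$ for which $(-\sigma)_X$ is all positive, equivalently $\sigma_X$ is all negative. This is the desired characterization.

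I do not expect any of the three parts to pose a real obstacle: the most delicate step is the spanning-tree construction in (ii), whose only subtlety is checking that non-tree edges are made positive, and that check uses the balance hypothesis in exactly the expected way. The remaining care is notational, in order to treat loops, multiple edges, and disconnected graphs uniformly (the latter being handled by applying the construction component by component).
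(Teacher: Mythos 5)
Your argument is correct, and parts (i) and (iii) match the paper's proof almost exactly: the paper also disposes of (i) by observing that switching preserves the sign of every cycle (you merely spell out the parity count that the paper leaves implicit), and it proves (iii) by the same reduction, namely that $G_\sigma$ is antibalanced iff $G_{-\sigma}$ is balanced and that switching commutes with global negation of $\sigma$. The genuine divergence is in the forward direction of (ii). The paper invokes Harary's theorem as a black box: in a balanced signed graph the set of negative edges is a cocycle, which splits $V$ into $X$ and $V-X$, and switching $X$ makes all edges positive. You instead give a self-contained construction: fix a spanning tree and a root, define the potential $\mu(v)$ as the sign of the tree path from the root, switch the set where $\mu=-1$, and verify tree edges and non-tree edges separately, using balance only for the fundamental cycles of the non-tree edges. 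Your route buys independence from the cited literature and makes visible exactly where the balance hypothesis is used; the paper's route is shorter and leans on a classical result it was going to cite anyway. One small point of care your version needs that the cocycle version gets for free: you should note explicitly that a negative loop or a negative cycle inside a single component cannot be repaired by any switching, which is consistent since balance forbids them; and your component-by-component remark and your handling of parallel edges (whose fundamental cycles have length $2$) are exactly the right patches.
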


\begin{proof}
(i) Switching does not change the sign of any cycle.

(ii) Harary \cite{hn} has shown that the set of negative edges of a balanced
signed graph, if it is not empty, constitutes a cocycle of $G_{\sigma}$.
The cocycle divides $V$ into two sets, $X$ and $V-X$, such that $F$ consists of all edges with one end in each set.  
Thus by switching $X$, we obtain $\sigma(e) = +1 \ \forall\ e \in F$ in the new graph $G_{\sigma_{X}}$ and the other edge signs remain positive.  Thus $G_{\sigma_{X}}$ is all positive.  

Conversely, if there exists $X \subseteq V$ such that $G_{\sigma_{X}}$ is all positive, then $G_{\sigma_{X}}$ is balanced, so $G_\sigma$ is balanced by part (i).

(iii) $G_\sigma$ is antibalanced $\Leftrightarrow$ $G_{-\sigma}$ is balanced $\Leftrightarrow$ $\exists\ X \subseteq V$ such that $G_{(-\sigma)_{X}}=G_{-\sigma_{X}}$ is all positive $\Leftrightarrow$ $\exists\ X \subseteq V$ such that $G_{\sigma_X}$ is all negative.
\end{proof}

Proposition \ref{9} applies to bidirected graphs (cf.\ \cite{b}) because of Definition \ref{6} and Proposition \ref{ssw}.  Similarly, all propositions about signed graphs $G_\sigma$ apply to bidirected graphs $G_\tau$ through the signature $\sigma$ determined by $\tau$.

\subsection{Bipaths in Bidirected Graphs }

\begin{definition}\label{11} {\rm\cite{b}}
{\rm
Let $G_{\tau}$ be a bidirected graph, and let $P$ be a chain connecting $x$ and $y$ in $G_{\tau}$:
$P : x e_{1} x_{1} \ldots e_{i} x_{i} e_{i+1} \ldots x_{k-1} e_{k} y.$
We define 
$$W_{P}(x_{i})=\tau (e_{i}, x_{i})+\tau(e_{i+1}, x_{i}) \text{ for every } x_{i}\in V(P),\ i=1,\ldots,k-1.$$ 
(We note that $W_{P}(x_{i})$ and $W_{P}(x_{j})$ may differ when $i \neq j$, even if $x_i=x_j$.)  
Let $\tau(e_{1}, x) =\alpha$ and $\tau(e_{k},y)=\beta$; then we write
$$P=P_{(\alpha, \beta)}(x, y): x^{\alpha} e_{1} x_{1} \ldots e_{i} x_{i+1} e_{i+1} \ldots x_{k-1} e_{k} y^{\beta}.$$
We call $P_{(\alpha, \beta)}(x, y)$ an \emph{$(\alpha, \beta)$ bipath} from $x$ to $y$, or more simply 
a \emph{bipath from $x^{\alpha}$ to $y^{\beta}$}, if:
\begin{enumerate}[{\rm(i)}]
\item $k \geq 1$.
\item $\tau(e_{1}, x) =\alpha$, and $\tau(e_{k},y)=\beta$.
\item $W_{P}(x_{i}) = 0$, $\forall i = 1, \ldots, k - 1$ (if $k > 1$).
\item $P_{(\alpha, \beta)}(x, y)$ is minimal for the properties {\rm(i)--(iii)}, given $x^\alpha$ and $y^\beta$.
\end{enumerate}

If $P_{(\alpha, \beta)}(x, y)$ satisfies (i)--(iii), we call it a \emph{bichain} from $x^\alpha$ to $y^\beta$.  Thus a bipath is a minimal bichain (in the sense of (iv)); however, it need not be a path (an elementary chain).
}
\end{definition}

In the notation for a bipath $P$, we define $x_0 = x$ and $x_k = y$.  Then edge $e_i$ has vertices $x_{i-1}$ and $x_i$, $\forall i=1,2,\ldots,k$.

\begin{definition}\label{defreverse}
{\rm 
If $P_{(\alpha, \beta)}(x, y)$ is a bipath from $x^{\alpha}$  to $y^{\beta}$, then 
$$P_{(\beta, \alpha)}(y, x): y^{\beta} e_{k} x_{k-1} \ldots e_{i+1} x_{i} e_{i} \ldots x_{1} e_{1} x^{\alpha}$$ 
is also a bipath, from $y^{\beta}$ to $x^{\alpha}$.  It is called the \emph{reverse} of $P_{(\alpha, \beta)}(x, y)$.
}
\end{definition}

\begin{remark}\label{bpathconsec}
{\rm 
In a bipath no two consecutive edges $e_i, e_{i+1}$ can be equal, because then by cutting out $e_i x_i e_{i+1}$ we obtain a shorter bipath, which is absurd.
}
\end{remark}

\begin{proposition}\label{bwalksign}
{\rm 
The sign of a bichain $P_{(\alpha, \beta)}(x, y)$ is $\sigma(P) = -\alpha\beta$.
}
\end{proposition}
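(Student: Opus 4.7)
The plan is a direct computation from the definitions, using only Definition \ref{6} for edge signs and the b-walk condition (iii) from Definition \ref{11}. First I would write
\[
\sigma(P) \;=\; \prod_{i=1}^{k}\sigma(e_i) \;=\; \prod_{i=1}^{k}\bigl(-\tau(e_i,x_{i-1})\tau(e_i,x_i)\bigr) \;=\; (-1)^{k}\prod_{i=1}^{k}\tau(e_i,x_{i-1})\tau(e_i,x_i),
\]
where I use the convention $x_0=x$ and $x_k=y$ introduced right after Definition \ref{11}.

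Next I would regroup the $2k$ factors $\tau(e_i, x_{i-1})$ and $\tau(e_i, x_i)$ by vertex rather than by edge. Each internal vertex $x_i$ with $1 \le i \le k-1$ contributes exactly two factors, namely $\tau(e_i,x_i)$ and $\tau(e_{i+1},x_i)$; the endpoints $x_0$ and $x_k$ contribute a single factor each, namely $\tau(e_1,x_0)=\alpha$ and $\tau(e_k,x_k)=\beta$. This yields
\[
\prod_{i=1}^{k}\tau(e_i,x_{i-1})\tau(e_i,x_i) \;=\; \alpha\beta \cdot \prod_{i=1}^{k-1}\tau(e_i,x_i)\tau(e_{i+1},x_i).
\]

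Now I would apply the b-walk condition $W_P(x_i)=\tau(e_i,x_i)+\tau(e_{i+1},x_i)=0$, which is equivalent to $\tau(e_{i+1},x_i)=-\tau(e_i,x_i)$ and hence $\tau(e_i,x_i)\tau(e_{i+1},x_i)=-1$ for each $i=1,\ldots,k-1$. Substituting gives the interior product equal to $(-1)^{k-1}$, so
\[
\sigma(P) \;=\; (-1)^{k}\cdot \alpha\beta \cdot (-1)^{k-1} \;=\; (-1)^{2k-1}\alpha\beta \;=\; -\alpha\beta.
\]

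The argument requires $k\ge 1$, which is guaranteed by condition (i) of Definition \ref{11}; when $k=1$ the product over intermediate vertices is empty and the identity reduces to $\sigma(P)=-\alpha\beta$ directly from the single edge sign. There is no real obstacle here: the only thing to be careful about is matching the two factors at each internal vertex correctly and keeping track of the parity of $(-1)^k(-1)^{k-1}$, which is routine bookkeeping.
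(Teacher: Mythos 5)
Your proof is correct and follows essentially the same route as the paper's: expand $\sigma(P)$ as the product of edge signs via Definition \ref{6}, regroup the half-edge factors by vertex, and use $W_P(x_i)=0$ to evaluate each interior pair as $-1$. The only cosmetic difference is that you extract the factor $(-1)^k$ up front and track parity explicitly, whereas the paper keeps each $-1$ attached to its bracket; the substance is identical.
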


\begin{proof}
Let  $P_{(\alpha, \beta)}(x, y) : x^{\alpha}e_{1}x_{1}\ldots x_{k-1}e_{k}y^{\beta}$ be a bichain from $x^{\alpha}$ to $y^{\beta}$.  The sign of this bichain is given by
\begin{align*}
\sigma(P_{(\alpha, \beta)}(x, y))&= {\prod_{e\in P_{(\alpha, \beta)}(x, y)}\sigma(e)} \\
 &= [-\tau(e_{1},x)\tau(e_{1}, x_{1})][-\tau(e_{2}, x_{1})\tau(e_{2}, x_{2})] \ldots \\
&\qquad [-\tau (e_{k-2}, x_{k-2})\tau (e_{k-1}, x_{k-1})]
[-\tau(e_{k}, x_{k-1})\tau (e_{k}, y)] \\
 &= -\tau (e_{1}, x) [-\tau (e_{1}, x_{1})\tau(e_{2}, x_{1})] \ldots\\
 &\qquad [-\tau (e_{k-1}, x_{k-1}) \tau (e_{k},
x_{k-1})] \tau (e_{k}, y).
\end{align*}
According to the definition of bichains we have 
$$W_{P}(x_{i})=\tau (e_{i}, x_{i})+\tau(e_{i+1}, x_{i})= 0,$$
therefore $\tau(e_{i}, x_{i})\tau(e_{i+1}, x_{i}) = -1 \ \forall\ i = 1,\ldots,k - 1.$  Thus, 
$$
\sigma (P_{(\alpha, \beta)}) = - \tau(e_{1}, x) \tau(e_{k}, y) =  -\alpha\beta,
$$ 
which proves the result.
\end{proof}

\begin{proposition}\label{bpath}
Let $G_{\tau}$ be a bidirected graph, and let 
$$P : x_0 e_{1} x_{1} \ldots e_{i} x_{i} e_{i+1} \ldots x_{k-1} e_{k} x_k,$$
where $k\geq1$ and $e_{i}=\{x_{i-1}^{\alpha_{i-1}}, x_i^{\beta_i}\}$ for $i=1,\ldots,k$, be a chain in $G_\tau$.  
Then $P$ is a bipath if, and only if, 
\begin{enumerate}[{\rm(a)}]
\item $\alpha_{i}=-\beta_{i}$ for $i=1,\ldots,k-1$ and
\item $x_i^{\alpha_i} \neq x_j^{\alpha_j}$ when $i<j$ and $(i,j) \neq (0,k)$;
\end{enumerate}
and then it is an $(\alpha_0,\beta_k)$ bipath from $x_0^{\alpha_0}$ to $x_k^{\beta_k}$.
\end{proposition}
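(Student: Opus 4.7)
The aim is to match conditions (a) and (b) of the proposition with conditions (iii) and (iv) of Definition~\ref{11}; conditions (i) and (ii) of the definition are already encoded in the hypotheses (the chain has $k\ge 1$ and the endpoint signs are $\alpha_0$ and $\beta_k$). Unpacking the notation $e_i = \{x_{i-1}^{\alpha_{i-1}}, x_i^{\beta_i}\}$ gives $\beta_i = \tau(e_i, x_i)$ and $\alpha_i = \tau(e_{i+1}, x_i)$, hence $W_P(x_i) = \alpha_i + \beta_i$ at each internal vertex. Condition (a) is then literally the b-walk identity $W_P(x_i)=0$. It remains to equate minimality of $P$ among b-walks with the same endpoint data (Definition~\ref{11}(iv)) with condition (b). Throughout, I adopt the natural convention $\alpha_k := -\beta_k$, extending the relation $\alpha_i=-\beta_i$ of (a) to the terminal vertex so that (b) reads uniformly across indices.

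For the forward direction, assume $P$ is a b-path and suppose (b) fails: there exist $0\le i<j\le k$ with $(i,j)\neq(0,k)$ such that $x_i=x_j$ and $\alpha_i=\alpha_j$. Form the shortened chain
$$P' : x_0\, e_1\, x_1 \cdots e_i\, x_i\, e_{j+1}\, x_{j+1} \cdots e_k\, x_k,$$
which has $k-(j-i)\ge 1$ edges precisely because $(i,j)\neq(0,k)$. I would verify that $P'$ is a b-walk from $x_0^{\alpha_0}$ to $x_k^{\beta_k}$ by case-splitting on the splice position: if $0<i<j<k$, the splice is internal and $W_{P'}(x_i)=\beta_i+\alpha_j=-\alpha_i+\alpha_i=0$; if $i=0$, the new initial sign at $x_0=x_j$ is $\tau(e_{j+1},x_j)=\alpha_j=\alpha_0$; if $j=k$, the new terminal sign at $x_k=x_i$ is $\tau(e_i,x_i)=\beta_i=-\alpha_i=-\alpha_k=\beta_k$. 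All other sign conditions are inherited from $P$. This shorter b-walk contradicts the minimality of $P$.

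For the converse, assume (a) and (b) but that $P$ is not minimal, so some proper sub-chain $Q$ of $P$ is a b-walk from $x_0^{\alpha_0}$ to $x_k^{\beta_k}$. Then $Q$ arises from $P$ by excising one or more disjoint maximal blocks of consecutive edges. Each excised block, say removing $e_{i+1},\ldots,e_j$, identifies $x_i$ with $x_j$ in $Q$ (they become the same vertex of the walk), and the relevant b-walk sign condition at the splice point in $Q$ (either the internal $W_Q=0$ or an endpoint sign condition, as appropriate) forces $\alpha_i=\alpha_j$. Since $Q$ is a nonempty proper sub-chain, at least one excised block must satisfy $(i,j)\neq(0,k)$: a single block equal to $(0,k)$ would delete every edge and make $Q$ empty; and if two or more blocks are excised, no individual block can be $(0,k)$. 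This contradicts (b).

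The main subtlety is the sign bookkeeping at boundary splices ($i=0$ and $j=k$); the convention $\alpha_k:=-\beta_k$ is what makes the excluded pair $(0,k)$ correspond exactly to the degenerate splice that would erase the whole walk, and, once this is in place, the individual sub-case verifications reduce to the short mechanical checks displayed above.
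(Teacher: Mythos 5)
Your proof is correct and follows essentially the same route as the paper's: identify condition (iii) of Definition~\ref{11} with (a) via $W_P(x_i)=\alpha_i+\beta_i$, prove the forward direction by excising the segment between a repeated signed vertex, and prove minimality in the converse by showing any proper sub-b-walk would force $x_i^{\alpha_i}=x_j^{\alpha_j}$ at an excision with $(i,j)\neq(0,k)$. Your explicit convention $\alpha_k:=-\beta_k$ is in fact the one needed to make condition (b) meaningful for pairs $(i,k)$, a point the paper leaves implicit (its proof even writes $\beta=\alpha_k$, apparently a slip for $\beta=\beta_k$), so your bookkeeping at the terminal vertex is, if anything, more careful.
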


\begin{proof}
Let $x=x_0$, $y=x_k$, $\alpha = \alpha_0$, $\beta = \alpha_k$.  
Since $W_{P}(x_{i}) = \beta_i+\alpha_{i}$ for $i=1,\ldots,k-1$, condition (iii) is equivalent to condition (a).

Assume $P$ is an $(\alpha,\beta)$ bipath from $x$ to $y$.    Therefore, $P$ is a chain 
$$x^{\alpha_0} e_1 x_1 \ldots e_i x_i e_{i+1} \ldots x_{k-1} e_{k} y^{\beta_k}$$
from $x_0^{\alpha_0}$ to $x_k^{\beta_k}$ that satisfies (i)--(iii) in Definition \ref{11}.  
If $x_i^{\alpha_i} = x_j^{\alpha_j}$ for some $i<j$, then by cutting out $e_{i+1} \ldots e_{j}$ we get a shorter chain with the same properties (i)--(iii), unless $(i,j) = (0,k)$.  
We conclude that if $x_i=x_j$ ($i<j$ and $(i,j) \neq (0,k)$), then $\alpha_i\neq\alpha_j$.  
It follows that $P$ satisfies (b).

Assume $P$ satisfies (a) and (b).  Then it satisfies (i)--(iii).  Suppose $P$ were not minimal with those properties.  Then there is an $(\alpha,\beta)$ bipath $Q$ from $x$ to $y$ whose edges are some of the edges of $P$ in the same order as in $P$.  
If $Q$ begins with edge $e_{i+1}$, then it begins at $x_i^{\alpha_i}$ and $x_i^{\alpha_i} = x^\alpha = x_0^{\alpha_0}$, therefore $i=0$ by (b).  Similarly, $Q$ ends at edge $e_k$ and vertex $x_k^{\beta_k}$.  
If $Q$ includes edges $e_i$ and $e_{j+1}$ with $i<j$ but not edges $e_{i+1},\ldots,e_j$, then $x_i^{\alpha_i} = x_i^{-\beta_i} = x_j^{\alpha_j}$, contrary to (b).  
Therefore, $Q$ cannot omit any edges of $P$.  It follows that $P$ is minimal satisfying (i)--(iii), so $P$ is an $(\alpha,\beta)$ bipath from $x$ to $y$.  
\end{proof}

\begin{corollary}\label{positive cyclic bipath}
If $P$ is a bipath that contains a positive cycle $C$, then $P=C$.
\end{corollary}

Examples of bipaths can be seen in figure \ref{FIG5}.

We now give the different types of bipath which have a unique cycle that is negative.

\begin{definition}\label{12}
{\rm
A \emph{purely cyclic bipath} at a vertex $x$ in a bidirected graph is a bipath $C$ from $x$ to $x$ whose chain is a cycle. We say $C$ is \emph{on} the vertex $x$.  The sign of $C$ is the sign of its chain.
}
\end{definition}

We note that in a purely cyclic negative bipath $C$ on $x$, $x$ is the unique vertex in $V(C)$ such that $W_{C}(x) = \pm 2$.

\begin{definition}\label{13}
{\rm
A \emph{cyclic bipath} $P$ connecting two vertices $x$ and $y$ (not necessarily distinct) in a bidirected graph $G_{\tau}$, is a bipath from $x$ to $y$ which contains a unique purely cyclic bipath, which is negative.
Figure \ref{FIG5} shows the three possible cases.  We note that $\alpha, \beta, \gamma, \lambda  \in\{-1, +1\}.$  
If $x=y$ in type (a), the cyclic bipath is purely cyclic.
}
\end{definition}

\begin{figure}[h]
  \includegraphics[scale=.8]{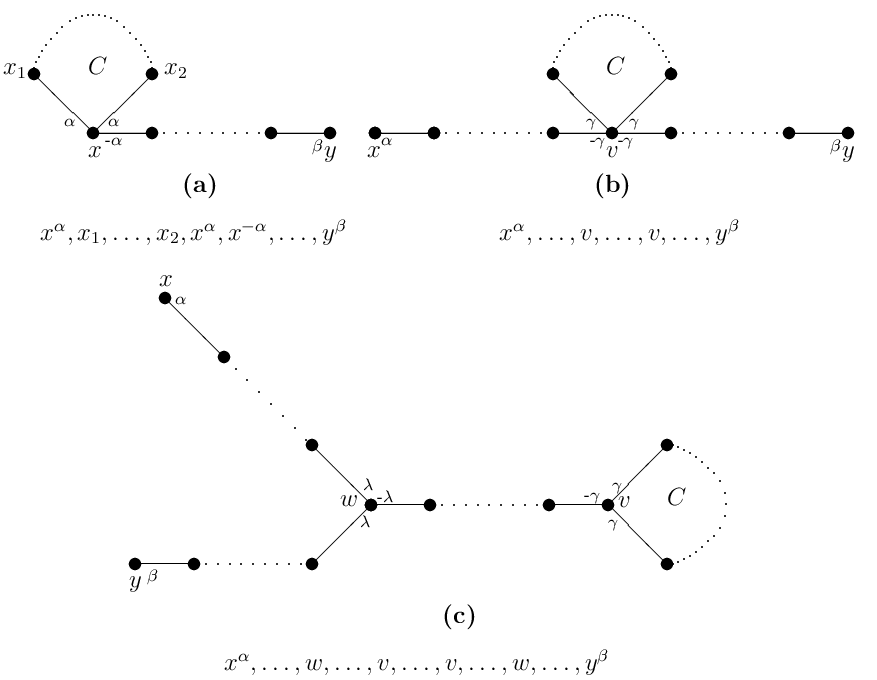}
  \\
  \caption{The three types of cyclic bipath.  In (a), $x=y$ is possible.  In (c), $x$ or $y$ or both may equal $w$, but $w \neq v$.}\label{FIG5}
\end{figure}

\begin{lemma}\label{cbpath}
A cyclic bipath must have one of the forms in figure \ref{FIG5}.
\end{lemma}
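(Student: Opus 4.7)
The plan is to locate the unique negative purely cyclic b-path $C$ as a contiguous subchain of $P$, decompose $P$ around $C$, and then enumerate the three figure cases according to the emptiness of the resulting tails.

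First I would show that $C$ appears as a contiguous closed subchain of $P$ based at some vertex $v$. Since $C$ is a purely cyclic b-path on $v$, the vertex $v$ is visited at least twice in $P$; let $i<j$ be the corresponding positions. Proposition \ref{bpath}(b) forces the outgoing signs $\alpha_i$ and $\alpha_j$ at the two occurrences of $v$ to differ (unless $(i,j)=(0,k)$), which is exactly consistent with $C$ entering and leaving $v$ with opposite outgoing signs. I would then split $P=P_1\cdot C\cdot P_2$, where $P_1$ is the prefix from $x=x_0$ to $x_i=v$ and $P_2$ is the suffix from $x_j=v$ to $x_k=y$; each of $P_1,P_2$ inherits conditions (a) and (b) of Proposition \ref{bpath} from $P$, so each is either empty or itself a b-path (a ``tail'').

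Next comes the case analysis on the emptiness of $P_1$ and $P_2$. If both are empty, then $P=C$, $x=y=v$, and $P$ is of type (a). If exactly one is empty, then $P$ is $C$ joined at $v$ to a single tail, which is type (b). If both are nonempty, both tails attach to $C$ at the common vertex $v$, giving type (c); the auxiliary vertex $w\neq v$ in the figure is the far end or meeting point of the two tails, whose role is pinned down by the uniqueness of $C$ and the alternation condition (a) of Proposition \ref{bpath}.

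The main obstacle is the first step: showing $C$ really sits as a contiguous subchain of $P$, rather than having its edges interleaved with tail edges. I expect this to follow from combining the vertex-sign distinctness in Proposition \ref{bpath}(b) with the uniqueness of the negative purely cyclic b-path inside $P$: any interleaving would either create a second purely cyclic b-path (contradicting uniqueness) or force a repeated vertex-sign pair in violation of (b). A minor secondary point is to confirm in type (c) that the two tails carry no further closed subchains, which again follows from the uniqueness of $C$.
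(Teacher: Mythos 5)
Your outline has the right shape---decompose $P$ as $P_1\cdot C\cdot P_2$ at $v$ and classify by the tails---but the two places where you defer the work are exactly where the proof lives, and the tools you propose for the first one do not close it. An interleaving of $C$-edges with non-$C$-edges means $P$ leaves the cycle at some vertex $z$ of $C$ and comes back. Since the underlying graph of $P$ is $C$ plus attached trees, such an excursion goes into a tree: it creates no second purely cyclic b-path (so uniqueness of $C$ is not contradicted), and it need not repeat any signed vertex $x_i^{\alpha_i}$ (so condition (b) of Proposition \ref{bpath} is not directly violated). What it does force is a return across the same bridge by which it left, hence two consecutive equal edges at the deepest point of the excursion, contradicting minimality via Remark \ref{bpathconsec}; that, plus the fact that $P$ has only two ends, is the paper's argument, and nothing in your sketch replaces it. You also take for granted that the decomposition is based at $v$. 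The reason is that $W_C(v)=\pm2$ means the two half-edges of $C$ at $v$ have the \emph{same} sign, so by condition (iii) of Definition \ref{11} they cannot be consecutive in $P$ (unless $v=x=y$), whereas at every other vertex of $C$ the two cycle half-edges must be consecutive; your remark that the two visits of $v$ receive opposite outgoing signs under Proposition \ref{bpath}(b) is a consistency check, not a derivation of this.

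The case analysis is also not the right one. The trichotomy of figure \ref{FIG5} is not governed by how many of $P_1,P_2$ are nonempty but by how the two tails overlap. Both tails attach to $v$ by half-edges of sign $-\epsilon$, where $\epsilon$ is the common sign of $C$'s half-edges at $v$, so they may be internally disjoint (meeting only at $v$) or they may share a common segment from $v$ down to some $w\neq v$, traversed in the two opposite directions---and only the latter is the type with the vertex $w$. Declaring ``both tails nonempty'' to be type (c) therefore misclassifies the two-disjoint-tails case, and saying that $w$ is ``pinned down by the uniqueness of $C$'' skips the needed step: one must show, as the paper does, that all of $P$ outside $C$ is a single tree $T$ attached at $v$ which equals the union of the paths in $T$ from $x$ and from $y$ to $v$, and then read off the three possible shapes of such a union (two branches meeting only at $v$, a single branch, or a stem $v$--$w$ with branches at $w$).
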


\begin{proof}
Let $P$ be a cyclic bipath from $x$ to $y$, $C$ the purely cyclic bipath in $P$, and $v$ the vertex at which $W_C(v)=\pm2$.  The graph of $P$ must consist of $C$ and trees attached to $C$ at a vertex, and it can have at most two vertices with degree 1 because $P$ has only two ends.  
We may assume $P \neq C$ and $y \neq v$.  
Since $W_C(v) \neq 0$, $P$ must enter $C$ at $v$ (unless $x=v$) and leave it at $v$ to get to $y$.  Therefore, there must be a tree attached to $v$.  
There cannot be a tree attached to any vertex $z$ of $C$ other than $v$, because $P$ would have to enter the tree from $C$ at $z^\gamma$ and retrace its path back to $z^\gamma$ in $C$, which would oblige $P$ to contradict Remark \ref{bpathconsec}.  Therefore $x$ and $y$ are both in the tree $T$ attached to $v$, possibly with $x=v$.  If $x\neq v$, then $x$ must be a vertex of degree 1 in $T$, or $P$ would contradict Remark \ref{bpathconsec}.  Similarly, $y$ must be a vertex of degree 1 in $T$.  As $T$ must be the union of the paths in $T$ from $x$ and $y$ to $v$, $P$ can only be one of the types in figure \ref{FIG5}.
\end{proof}

\begin{definition}\label{14}{\rm\cite{b}}
{\rm
Let $G_{\tau}$ be a bidirected graph, let $\alpha, \beta \in \{-1, +1\}$, and let $C$ be a bipath $C: x^{\alpha}\ e_{1}\ x_{1}\ \ldots\ x_{k-1}\ e_{k}\ x^{\beta}.$
If $\alpha = -\beta $, we say that $C$ is a \emph{bicircuit} of $G_{\tau}$.
}
\end{definition}

\setlength{\unitlength}{.4mm}

\vspace*{1.0cm}
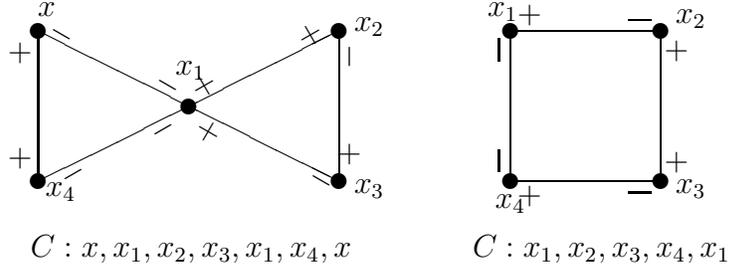
\begin{figure}[h]
\begin{center}
\begin{picture}(80,40)(0,-15)
\put(0,0){\line(0,1){40}}
\put(0,0){\line(2,1){80}}
\put(0,40){\line(2,-1){80}}
\put(80,0){\line(0,1){40}}
\multiput(0,0)(40,20){3}{\circle*{4}}
\multiput(0,40)(80,-40){2}{\circle*{4}}
\multiput(42,25)(24,12){2}{\rotatebox{30}{$+$}}
\multiput(-9,5)(0,25){2}{$+$}
\put(82,2){\rotatebox{90}{$+$}}
\put(42,9){\rotatebox{-30}{$+$}}
\multiput(4,41)(24,-12){2}{\rotatebox{-30}{$-$}}
\put(81,30){\rotatebox{90}{$-$}}
\multiput(4,-5)(24,12){2}{\rotatebox{30}{$-$}}
\put(65,0){\rotatebox{-30}{$-$}}
\put(-8,-10){$x_{4}$}
\put(80,-10){$x_{3}$}
\put(-6,45){$x$}
\put(80,45){$x_{2}$}
\put(36,35){$x_{1}$}
\put(-12,-25){$C : x, x_{1}, x_{2}, x_{3}, x_{1}, x_{4}, x$}
\end{picture}
\hspace*{3cm}
\begin{picture}(40,40)(0,-15)
\multiput(0,0)(0,40){2}{\line(1,0){40}}
\multiput(0,0)(40,0){2}{\line(0,1){40}}
\multiput(0,0)(40,0){2}{\circle*{4}}
\multiput(0,40)(40,0){2}{\circle*{4}}
\multiput(-7,4)(0,25){2}{\rotatebox{90}{$-$}}
\put(28,-6){$-$}
\put(28,42){$-$}
\multiput(42,3)(0,30){2}{$+$}
\put(5,-7){$+$}
\put(5,43){$+$}
\put(-8,-10){$x_{4}$}
\put(40,-10){$x_{3}$}
\put(40,46){$x_{2}$}
\put(-8,46){$x_{1}$}
\put(-20,-25){$C : x_{1}, x_{2}, x_{3}, x_{4}, x_{1}$}
\end{picture}
\end{center}
\caption{Two kinds of bicircuit.}\label{FIG4}
\end{figure}

\section{Transitive Closure in Bidirected Graphs}

\begin{definition}\label{15t}
Let $G_{\tau}$ be a bidirected graph.  $G_\tau$ is \emph{transitive} if, for any vertices $x$ and $y$ (not necessarily distinct) such that there is an $(\alpha,\beta)$ bipath from $x^\alpha$ to $y^\beta$ in $G_\tau$, there is an edge $\{x^\alpha, y^\beta\}$ in $G_\tau$.
\end{definition}

\begin{definition}\label{15}
{\rm
Let $G_{\tau}$ be a bidirected graph.
The \emph{transitive closure} of  $G_{\tau}$ is the graph, notated $\Ft(G_{\tau}) = (V, \Ft(E); \tau)$, such that $\{x^{\alpha}, y^{\beta}\} \in\Ft(E)$ if there is a bipath $P_{(\alpha, \beta)}(x, y)$ from
$x^{\alpha}$ to $y^{\beta}$ in $G_{\tau}$ ($x$ and $y$ are not necessarily distinct).
}
\end{definition}

\begin{remark}\label{transitive closure edges}
{\rm 
We see that $E \subseteq \Ft(E)$.  If $\{x^{\alpha}, y^{\beta}\} \in E$, then $\{x^{\alpha}, y^{\beta}\}$ is the edge of a bipath of length $1$, so $\{x^{\alpha}, y^{\beta}\} \in\Ft(E)$.
}
\end{remark}

\begin{remark}\label{transitive closure loops}
{\rm 
If there is a bipath $P_{(\alpha, \beta)}(x, x)$ from $x^{\alpha}$ to $x^{\beta}$ in $G_{\tau}$, then there is a loop $\{x^\alpha,x^\beta\}$ with sign $-\alpha\beta$ in $\Ft(G_{\tau})$.
}
\end{remark}

\begin{remark}\label{transitive closure bicircuits}
{\rm 
If $G_\tau$ contains a bicircuit $C$, then $\Ft(G_\tau)$ contains all the edges $\{x^{-\alpha},y^{-\beta}\}$ such that $\{x^{\alpha},y^{\beta}\} \in E(C)$.  In other words, the transitive closure contains the opposite orientation of every edge that lies in a bicircuit in $G_\tau$.  For an example see figure \ref{FIG7}.
}
\end{remark}

\begin{proposition}\label{ft2}
$\Ft$ is an abstract closure operator; that is: 
\begin{enumerate}[{\rm(i)}]
\item $G_{\tau}$ is a partial graph of $\Ft(G_{\tau})$.
\item If $H_\tau$ is a partial graph of $G_\tau$, then $\Ft(H_\tau)$ is a partial graph of $\Ft(G_\tau)$.
\item $\Ft(\Ft(G_{\tau})) = \Ft(G_{\tau})$.
\end{enumerate}
\end{proposition}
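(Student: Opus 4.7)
The plan is to establish the three properties in turn. Property (i) is the easiest: by Remark \ref{transitive closure edges} every edge of $G_\tau$ is already an edge of $\Ft(G_\tau)$, since a single edge $\{x^\alpha,y^\beta\}$ is itself a b-path from $x^\alpha$ to $y^\beta$. Hence $G_\tau$ is a partial graph of $\Ft(G_\tau)$.

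For property (ii), the key observation is that the b-walk conditions (i)--(iii) in Definition \ref{11} are \emph{local}, depending only on the chain and the values of $\tau$ on its half-edges. So if $P$ is a b-path in $H_\tau$ from $x^\alpha$ to $y^\beta$, then, viewing $P$ as a chain in $G_\tau$, the same sequence of half-edge signs shows that $P$ is a b-walk in $G_\tau$ from $x^\alpha$ to $y^\beta$. Any b-walk with prescribed endpoints $x^\alpha$ and $y^\beta$ contains a subchain that is minimal with properties (i)--(iii), and that subchain is a b-path in $G_\tau$. Therefore $\{x^\alpha,y^\beta\}\in\Ft(E(G_\tau))$, proving $\Ft(H_\tau)$ is a partial graph of $\Ft(G_\tau)$.

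Property (iii) is where the real work lies. The inclusion $\Ft(G_\tau)\subseteq\Ft(\Ft(G_\tau))$ is immediate from (i) applied to $\Ft(G_\tau)$. For the reverse inclusion, suppose $\{x^\alpha,y^\beta\}\in\Ft(E(\Ft(G_\tau)))$, so there is a b-path
$$
P: x^{\alpha_0}\,e_1\,x_1\,e_2\,\ldots\,e_k\,x_k^{\beta_k}
$$
in $\Ft(G_\tau)$, with $\alpha_0=\alpha$, $\beta_k=\beta$, and $x_0=x$, $x_k=y$. Each edge $e_i=\{x_{i-1}^{\alpha_{i-1}},x_i^{\beta_i}\}$ of $\Ft(G_\tau)$ is, by definition of $\Ft$, realized by a b-path $Q_i$ in $G_\tau$ from $x_{i-1}^{\alpha_{i-1}}$ to $x_i^{\beta_i}$. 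I plan to concatenate $Q_1,Q_2,\ldots,Q_k$ into a single chain $Q$ in $G_\tau$ and show that $Q$ is a b-walk from $x^\alpha$ to $y^\beta$. Condition (ii) of Definition \ref{11} holds since the first half-edge of $Q_1$ has sign $\alpha_0=\alpha$ and the last half-edge of $Q_k$ has sign $\beta_k=\beta$. For condition (iii), the interior vertices of each $Q_i$ already satisfy $W=0$ because $Q_i$ is a b-walk; at each junction vertex $x_i$ ($1\le i\le k-1$), the incoming half-edge from $Q_i$ has sign $\beta_i$ and the outgoing half-edge to $Q_{i+1}$ has sign $\alpha_i$, and Proposition \ref{bpath}(a) applied to the b-path $P$ gives $\alpha_i=-\beta_i$, so $W_Q(x_i)=\beta_i+\alpha_i=0$. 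Thus $Q$ is a b-walk, and extracting a minimal subchain with the same endpoints yields a b-path in $G_\tau$ from $x^\alpha$ to $y^\beta$, so $\{x^\alpha,y^\beta\}\in\Ft(E(G_\tau))$.

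The main obstacle is the junction check in part (iii): without Proposition \ref{bpath}(a) forcing $\alpha_i=-\beta_i$ in the outer b-path $P$, the concatenated chain $Q$ need not be a b-walk in $G_\tau$. Once that compatibility is in hand, the rest reduces to the routine fact that every b-walk contains a b-path with the same endpoints.
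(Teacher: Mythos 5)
Your proposal is correct and follows essentially the same route as the paper: for (iii) you concatenate the b-paths $Q_i$ realizing the edges of the outer b-path $P$, verify $W=0$ at the junction vertices via $\alpha_i=-\beta_i$ from Proposition \ref{bpath}, and conclude the concatenation yields the required b-path in $G_\tau$. Your explicit step of extracting a minimal subchain from the resulting b-walk is a small point the paper glosses over, but the argument is the same.
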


\begin{proof}
(i) and (ii) are obvious from the definition.

(iii) We prove that $\Ft(G_\tau)$ is transitive.  Let $P: x e_{1} x_{1} \ldots e_{i} x_{i} e_{i+1} \ldots x_{k-1} e_{k} y$ be a bipath in $\Ft(G_{\tau})$, with $e_{i}=\{x_{i-1}^{\alpha_{i-1}}, x_i^{\beta_i}\}$ for $i=1,\ldots,k$.  By Proposition \ref{bpath} $\alpha_{i}=-\beta_{i}$ for $i=1,\ldots,k-1$ and $P$ is an $(\alpha_0,\beta_k)$ bipath.  For each edge $e_i$ there is an $(\alpha_{i-1},\beta_i)$ bipath $Q_i(x_{i-1}, x_i)$ in $G_{\tau}$ (which may be $e_i$ itself).  Let $R = x_0 Q_1 x_1 Q_2 \ldots Q_k x_k$, the concatenation of $Q_1,\ldots,Q_k$.  
At each intermediate vertex $z$ of any $Q_i$ we have $W_{R}(z) = W_{Q_i}(z) = 0$ by property (iii) of Definition \ref{11}.  At each $x_i$, $i=1,\ldots,k-1$ we have $W_{R}(x_i) = \beta_i+\alpha_i = 0$ by Proposition \ref{bpath}.  Therefore, $R$ is an $(\alpha_0,\beta_k)$ bipath from $x$ to $y$ in $G_{\tau}$.  We deduce that the edge $\{x^{\alpha_0}, y^{\beta_k}\}$ is in the transitive closure of $G_{\tau}$.  Thus, $\Ft(G_{\tau})$ is transitive and its transitive closure is itself.
\end{proof}

Define  
\begin{equation*}
\overline{W}(P_{(\alpha, \beta)}(x, y)) = \sum_{e\in {P_{(\alpha, \beta)}(x, y)}}\overline{W}(e).
\end{equation*}

\begin{theorem}\label{16}
Given a bidirected graph $G_{\tau}$ and its transitive closure $\Ft(G_{\tau}) = (V, \Ft(E); \tau)$.  If $e =\{x^{\alpha}, y^{\beta}\}$ is the edge in $\Ft(G_{\tau})$ implied by transitive closure of the bipath $P_{(\alpha, \beta)}(x, y)$ from $x^{\alpha}$ to $y^{\beta}$ in $G_{\tau}$, then 
$$\overline{W}(P_{(\alpha, \beta)}(x, y)) = \overline{W}(e).$$
\end{theorem}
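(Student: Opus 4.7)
The plan is to compute $\overline{W}(P_{(\alpha,\beta)}(x,y))$ directly by expanding the edge-wise sum and then grouping half-edges by the vertex at which they sit, so that the interior vertex sums collapse via the b-walk condition $W_P(x_i)=0$.

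Concretely, write $P$ as $x_0 e_1 x_1 \cdots e_k x_k$ with $x_0 = x$ and $x_k = y$. By the definition of $\overline{W}$ on edges,
\begin{equation*}
\overline{W}(P_{(\alpha,\beta)}(x,y)) \;=\; \sum_{i=1}^{k} \overline{W}(e_i) \;=\; \sum_{i=1}^{k} \bigl[\tau(e_i, x_{i-1}) + \tau(e_i, x_i)\bigr].
\end{equation*}
I would then split off the contributions at the two endpoints and regroup the rest so that for every intermediate index $i=1,\ldots,k-1$ the half-edges $\tau(e_i,x_i)$ and $\tau(e_{i+1},x_i)$ appear together. This gives
\begin{equation*}
\overline{W}(P_{(\alpha,\beta)}(x,y)) \;=\; \tau(e_1,x) \;+\; \sum_{i=1}^{k-1} \bigl[\tau(e_i,x_i) + \tau(e_{i+1},x_i)\bigr] \;+\; \tau(e_k,y) \;=\; \tau(e_1,x) \;+\; \sum_{i=1}^{k-1} W_P(x_i) \;+\; \tau(e_k,y).
\end{equation*}
Property (iii) of Definition \ref{11} forces each $W_P(x_i)=0$, while conditions (ii) give $\tau(e_1,x)=\alpha$ and $\tau(e_k,y)=\beta$. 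Hence the sum equals $\alpha + \beta$. On the other side, the edge $e=\{x^\alpha,y^\beta\}$ of $\Ft(G_\tau)$ has $\tau(e,x)=\alpha$ and $\tau(e,y)=\beta$ by construction of the transitive closure, so $\overline{W}(e)=\alpha+\beta$ as well, matching.

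I do not expect a real obstacle here; the only thing one has to be slightly careful about is that vertices along a b-path may repeat (as permitted by Proposition \ref{bpath} and illustrated in the cyclic b-paths of Lemma \ref{cbpath}). However, the rearrangement above sums over \emph{edges}, not distinct vertices, and treats the two half-edges of each $e_i$ separately. The grouping is done by position in the chain, not by vertex identity, so repeated vertices cause no trouble; every internal position $i$ still contributes exactly $W_P(x_i)=0$, regardless of whether $x_i$ coincides with some earlier $x_j$. With that observation, the argument is a one-line telescoping and no further case analysis is needed.
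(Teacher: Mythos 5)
Your proof is correct and follows essentially the same route as the paper: expand $\overline{W}(P)$ over the edges, regroup the half-edges at each interior position so that condition (iii) of Definition \ref{11} kills the interior terms, and conclude $\overline{W}(P)=\alpha+\beta=\overline{W}(e)$. If anything you are slightly more careful than the paper, which writes the interior terms as $W(x_i)$ rather than the path-local $W_P(x_i)$; your observation that the grouping is by position in the chain, not by vertex identity, is exactly the right way to handle repeated vertices.
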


\begin{proof}
Let $P_{(\alpha, \beta)}(x, y) : x^{\alpha}e_{1}x_{1}\ldots x_{k-1}e_{k}y^{\beta}$  be a bipath from $x^{\alpha}$ to $y^{\beta}$  in $G_{\tau}$.  According to Definition \ref{2} we have 
$$\overline{W}(P_{(\alpha, \beta)}(x, y)) = \tau(e_{1},x) + W(x_{1}) + W(x_{2}) + \ldots + W(x_{k-1}) + \tau(e_{k},y),$$
and by Definition \ref{11} we obtain
$$\overline{W}(P_{(\alpha, \beta)}(x, y)) =  \tau(e_{1},x) + \tau(e_{k},y) = \alpha + \beta.$$
According to Definition \ref{2}, $\overline{W}(e) = \alpha + \beta = \overline{W}(P_{(\alpha, \beta)})$.
\end{proof}

We recall that $\sigma(P)$ designates the sign of a chain $P$ (Definition \ref{10}).

\begin{corollary}\label{17}
Given a bidirected graph $G_{\tau}$ and its transitive closure $\Ft(G_{\tau}) = (V, \Ft(E); \tau)$. If $e =\{x^{\alpha},  y^{\beta}\}$ is the edge implied by transitive closure of the bipath $P_{(\alpha, \beta)}(x, y)$ from $x^{\alpha}$
to $y^{\beta}$, then 
$$\sigma(P_{(\alpha, \beta)}(x, y))=\sigma(e).$$
\end{corollary}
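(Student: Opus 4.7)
The plan is to reduce both sides of the claimed identity to the common value $-\alpha\beta$ and thereby conclude they are equal. This should be essentially immediate given the machinery already assembled, so the proposal is a short one-paragraph argument with no real obstacle.

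First, I would apply Proposition \ref{bwalksign} to the b-path $P_{(\alpha,\beta)}(x,y)$ (noting that every b-path is in particular a b-walk, since conditions (i)--(iii) are precisely the defining conditions of a b-walk). That proposition gives immediately
\[
\sigma\bigl(P_{(\alpha,\beta)}(x,y)\bigr) = -\alpha\beta.
\]

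Next, I would unpack the sign of the edge $e = \{x^\alpha, y^\beta\} \in \Ft(E)$. By the notational convention of Definition \ref{11} (carried over to edges of $\Ft(G_\tau)$), writing $e = \{x^\alpha, y^\beta\}$ means $\tau(e,x) = \alpha$ and $\tau(e,y) = \beta$. Applying Definition \ref{6}, which defines the signature induced by a biorientation, yields
\[
\sigma(e) = -\tau(e,x)\,\tau(e,y) = -\alpha\beta.
\]

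Combining the two displayed equalities gives $\sigma(P_{(\alpha,\beta)}(x,y)) = \sigma(e)$, as required. The only subtle point worth a sentence of justification is the identification $\tau(e,x)=\alpha$ and $\tau(e,y)=\beta$ for the new edge $e$ produced by transitive closure, but this is just the convention under which Definition \ref{15} places the edge into $\Ft(G_\tau)$; no genuine obstacle arises.
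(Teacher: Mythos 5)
Your proposal is correct and follows exactly the paper's own argument: both reduce $\sigma(P_{(\alpha,\beta)}(x,y))$ to $-\alpha\beta$ via Proposition \ref{bwalksign} and compute $\sigma(e) = -\tau(e,x)\tau(e,y) = -\alpha\beta$ via Definition \ref{6}. No differences worth noting.
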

\begin{proof}
The sign of the bipath is $\sigma(P_{(\alpha, \beta)}(x, y)) = -\alpha\beta$ by Proposition \ref{bwalksign}.
We have $ \sigma(e) = - \tau (e, x)  \tau(e, y) = -\alpha\beta,$ 
from which the result follows.
\end{proof}

\begin{figure}[h]
\begin{center}
\begin{picture}(180,10)(0,0)
\multiput (0,0)(60,0){4}{\circle*{4}}
\put(0,0){\line(1,0){180}}
\put(-2,-12){$x$}
\put(59,-12){$a$}
\put(119,-12){$b$}
\put(179,-12){$y$}
\multiput(2,3)(45,0){2}{$-$}
\multiput(64,3)(45,0){2}{$+$}
\multiput(124,3)(45,0){2}{$-$}
\put (37,-30){$P_{(-,-)}(x,y) : x^{-},  a,  b,  y^{-}$}
\end{picture}
\vspace*{2cm}

\begin{picture}(180,20)(0,0)
\multiput (0,0)(60,0){4}{\circle*{4}}
\put(0,0){\line(1,0){180}}
\put(-3,-12){$x$}
\put(59,-12){$a$}
\put(119,-12){$b$}
\put(179,-12){$y$}
\curve(0,0, 60,18, 120,0)
\curve(0,0, 90,-18, 180,0)
\curve(60,0, 120,18, 180,0)
\multiput(15,1)(32,0){2}{$-$}
\multiput(64,-7)(45,0){2}{$+$}
\multiput(125,1)(32,0){2}{$-$}
\put(1,3){\rotatebox{30}{$-$}}
\put(110,8){\rotatebox{-30}{$+$}}
\put(60,5){\rotatebox{30}{$+$}}
\put(168,7.5){\rotatebox{-28}{$-$}}
\put(1,-6){\rotatebox{-24}{$-$}}
\put(167,-3){\rotatebox{-156}{$-$}}
\put(55,-35){$\Ft(P_{(-,-)}(x,y))$}
\end{picture}
\end{center}
\vspace{1.3cm}
\caption{An example of transitive closure of a bidirected graph.}\label{FigExplan}
\end{figure}
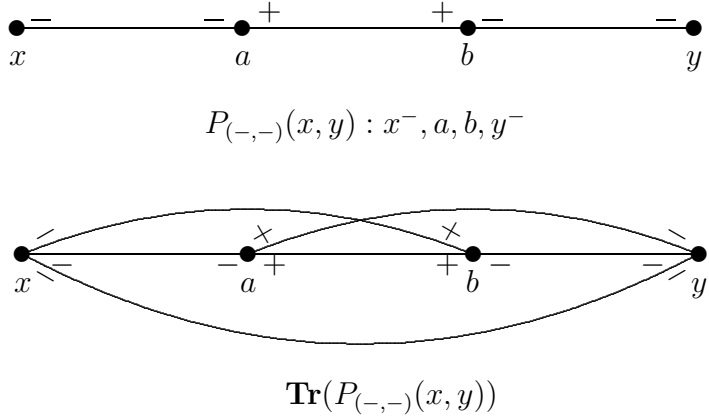

\begin{lemma}\label{18}
Let $G_\tau$ be a bidirected graph and $X\subseteq V$.  Then $\Ft(G_{\tau_X})$ is the result of switching $\Ft(G_\tau)$ by $X$.
\end{lemma}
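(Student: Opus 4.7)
The plan is to show that switching commutes with transitive closure by establishing that b-paths in $G_\tau$ and $G_{\tau_X}$ are the same underlying chains, only relabeled at their endpoints.

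First I would verify that at each interior vertex $x_i$ of a chain $P$, the sum $\tau(e_i, x_i) + \tau(e_{i+1}, x_i)$ either keeps its value (if $x_i \notin X$) or has its sign reversed (if $x_i \in X$) under the switching $\tau \mapsto \tau_X$; in either case, the condition $W_P(x_i) = 0$ is preserved. Thus conditions (i)--(iii) of Definition \ref{11} cut out the same set of underlying chains in both graphs, with only the endpoint labels $\alpha = \tau(e_1, x_0)$ and $\beta = \tau(e_k, x_k)$ possibly negated. Writing $\epsilon(z) = -1$ if $z \in X$ and $\epsilon(z) = +1$ otherwise, a chain $P$ is an $(\alpha, \beta)$ b-walk from $x^\alpha$ to $y^\beta$ in $G_\tau$ if and only if it is an $(\epsilon(x)\alpha, \epsilon(y)\beta)$ b-walk from $x^{\epsilon(x)\alpha}$ to $y^{\epsilon(y)\beta}$ in $G_{\tau_X}$. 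Minimality (condition (iv)) transfers across this correspondence because it refers only to the underlying chain and its endpoint labels, both of which are matched by the bijection.

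Second, I would read off the conclusion at the level of edges. By Definition \ref{15}, $\{x^{\alpha'}, y^{\beta'}\}$ is an edge of $\Ft(G_{\tau_X})$ precisely when $G_{\tau_X}$ contains a b-path from $x^{\alpha'}$ to $y^{\beta'}$. By the bijection this happens if and only if $G_\tau$ contains a b-path from $x^{\epsilon(x)\alpha'}$ to $y^{\epsilon(y)\beta'}$, i.e., if and only if $\{x^{\epsilon(x)\alpha'}, y^{\epsilon(y)\beta'}\}$ is an edge of $\Ft(G_\tau)$. By Definition \ref{8}, this last condition is exactly what it means for $\{x^{\alpha'}, y^{\beta'}\}$ to belong to the graph obtained by switching $\Ft(G_\tau)$ by $X$. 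The biorientations on both sides then agree by construction, as each is $\tau_X$ restricted to the common edge set.

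The main obstacle is simply careful sign bookkeeping at the two endpoints; no real combinatorial difficulty arises, because switching acts pointwise at each vertex and the b-walk condition at interior vertices is manifestly invariant under simultaneous sign flips of both incident half-edges.
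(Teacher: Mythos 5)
Your proposal is correct and follows essentially the same route as the paper: both arguments rest on the observation that switching by $X$ carries a b-path $P_{(\alpha,\beta)}(x,y)$ in $G_\tau$ to a b-path $P_{(\alpha\iota(x),\beta\iota(y))}(x,y)$ in $G_{\tau_X}$ (with $\iota$ your $\epsilon$) and then read off the edge correspondence in both directions. Your write-up actually justifies the interior-vertex invariance $W_P(x_i)=0$ more explicitly than the paper, which simply asserts that b-paths are preserved under switching.
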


\begin{proof}
We observe that a bipath in $G_\tau$ remains a bipath after switching $G_\tau$.
For a set $X \subseteq V$, define $\iota(v)=+1$ if $v\notin X$ and $-1$ if $v\in X$.

Assume that $e=\{x^\alpha,y^\beta\}$ is an edge of $\Ft(G_\tau)$, not in $E(G_\tau)$, that is implied by a bipath $P_{(\alpha, \beta)}(x, y)$ in $G_\tau$.  Switch $G_\tau$ and $\Ft(G_\tau)$ by $X$.  Then $e$ becomes $\{x^{\alpha\iota(x)},y^{\beta\iota(y)}\}$ and  $P_{(\alpha, \beta)}(x, y)$ becomes $P_{(\alpha\iota(x), \beta\iota(y))}(x, y)$.  Therefore $e$ is implied by the bipath $P_{(\alpha\iota(x), \beta\iota(y))}(x, y)$ in $G_{\tau_X}$, so $e$ is an edge in $\Ft(G_{\tau_X})$.  This proves that $\Ft(G_\tau)$ switched by $X$ is a partial graph of $\Ft(G_{\tau_X})$.

By similar reasoning, if $e=\{x^\alpha,y^\beta\}$ is an edge of $\Ft(G_{\tau_X})$ not in $E(G_{\tau_X})$, it is implied by a bipath $P_{(\alpha, \beta)}(x, y)$ in $G_{\tau_X}$.  Switching by $X$, the edge $\{x^{\alpha\iota(x)},y^{\beta\iota(y)}\}$ is implied by $P_{(\alpha\iota(x), \beta\iota(y))}(x, y)$, which is a bipath in $G_{\tau_X}$, so that $\{x^{\alpha\iota(x)},y^{\beta\iota(y)}\}$ is an edge of $G_{\tau_X}$ switched by $X$, which is $G_\tau$.  Therefore $\Ft(G_{\tau_X})$ switched by $X$ is a partial graph of $\Ft(G_\tau)$.  The result follows.
\end{proof}

\begin{proposition}\label{19}
The transitive closure of an all-positive bidirected graph is all positive.
The transitive closure of a balanced bidirected graph is balanced.
\end{proposition}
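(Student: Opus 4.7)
The plan is to prove the two assertions in order, using Theorem~\ref{16} for the first and then bootstrapping the second from the first via switching.

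For the first assertion, let $G_\tau$ be all positive, so $\overline{W}(e')=0$ for every $e'\in E$. Take an arbitrary edge $e=\{x^\alpha,y^\beta\}\in\Ft(E)$; if $e\in E$ there is nothing to prove, so assume $e$ is produced by a b-path $P_{(\alpha,\beta)}(x,y)$ of length $\ge 2$. By the displayed definition of $\overline{W}(P_{(\alpha,\beta)}(x,y))$ just before Theorem~\ref{16}, $\overline{W}(P_{(\alpha,\beta)}(x,y))=\sum_{e'\in P}\overline{W}(e')=0$. Theorem~\ref{16} then gives $\overline{W}(e)=\overline{W}(P_{(\alpha,\beta)}(x,y))=0$, so $e$ is positive. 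Hence $\Ft(G_\tau)$ is all positive.

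For the second assertion I would reduce to the first through a switching argument. Suppose $G_\tau$ is balanced, i.e.\ the signature $\sigma$ determined by $\tau$ is balanced. By Proposition~\ref{9}(ii) there is $X\subseteq V$ such that $G_{\sigma_X}$ is all positive as a signed graph. Because $\sigma(e)=-\tau(e,x)\tau(e,y)$, an edge satisfies $\sigma(e)=+1$ exactly when its two half-edges have opposite signs, i.e.\ when $\overline{W}(e)=0$; so ``all positive'' means the same thing for the signed graph $G_\sigma$ as for the bidirected graph $G_\tau$. Combined with Proposition~\ref{ssw}, which says $\tau_X$ determines $\sigma_X$, this shows that $G_{\tau_X}$ is all positive as a bidirected graph.

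Applying the first assertion gives $\Ft(G_{\tau_X})$ all positive, hence balanced. Lemma~\ref{18} then gives $\Ft(G_{\tau_X})$ equal to $\Ft(G_\tau)$ switched by $X$, and Proposition~\ref{9}(i) tells us that switching preserves balance. Unswitching by $X$ therefore yields that $\Ft(G_\tau)$ is balanced, completing the proof. The only delicate point, and the place I would be most careful, is keeping straight the equivalence between ``all positive'' in the bidirected and signed senses (so that Proposition~\ref{9}(ii) for signed graphs really does supply an all-positive \emph{bidirected} graph after switching); the rest is a direct application of the already-established machinery.
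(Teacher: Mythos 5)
Your proposal is correct and follows essentially the same route as the paper: the first assertion via $\overline{W}(P)=0$ and Theorem~\ref{16}, and the second by switching to an all-positive biorientation, applying the first assertion, and unswitching via Lemma~\ref{18} and Proposition~\ref{9}. Your explicit check that ``all positive'' agrees in the signed and bidirected senses is a point the paper leaves implicit, but it does not change the argument.
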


\begin{proof}
Assume $G_\tau$ is all positive.  Let $P_{(\alpha, \beta)}(x, y) : x^{\alpha}e_{1}x_{1}\ldots x_{k-1}e_{k}y^{\beta}$ be a bipath from $x^{\alpha}$ to $y^{\beta}$; we close this bipath by the positive edge $e=\{x^{\alpha}, y^{\beta}\}$.
Since $\overline{W}(e_i)=0$ for a positive edge $e_i$, $\overline{W}(P_{(\alpha, \beta)}(x, y))=0$.  By Theorem \ref{16}, $\overline{W}(e)=0$, which means that $\beta=-\alpha$.  Thus, $e$ is positive.

Assume $G_\tau$ is balanced.  By Proposition \ref{ssw} there is a vertex set $X \subseteq V$ such that $G_{\tau_X}$ is all positive.  By the first part, $\Ft(G_{\tau_X})$ is all positive, therefore balanced, and by Lemma \ref{18} it equals $\Ft(G_\tau)$ switched by $X$.  Therefore $\Ft(G_\tau)$ equals $\Ft(G_{\tau_X})$ switched by $X$, which is balanced by Proposition \ref{9}.
\end{proof}

The diagram below, obtained from the results above, shows that  the classical notion of transitive closure for directed graphs is a particular case of that found for bidirected graphs.
\vspace*{.5cm}
\begin{figure}[h]
\begin{center}
\begin{picture}(240,60)(-18,-15)
\put(-20,-13){{$G$ digraph}}
\put(30,-10){\vector(1,0){130}}
\put(30,52){\vector(1,0){130}}
\put(30,20){\vector(1,0){130}}
\put(67,-18) {{transitive closure}}
\put(165,-13){{$\Ft(G)$ digraph}}
\put(-30,50){{$G_{\tau}$ balanced}}
\put(67,44){{transitive closure}}
\put(67,12){{transitive closure}}
\put(165,50){{$\Ft(G_{\tau})$ balanced}}
\put(-26,18){{$G_{\tau}$ positive}}
\put(10,16){\vector(0,-1){20}}
\put(10,47){\vector(0,-1){20}}
\put(175,16){\vector(0,-1){20}}
\put(165,18){{$\Ft(G_{\tau})$ positive}}
\put(175,47){\vector(0,-1){20}}
\put(-48,35){{{by switching}}}
\put(180,35){{{by switching}}}
\put(-54,4){{{by orientation}}}
\put(180,4){{{by orientation}}}
\end{picture}
\end{center}
\end{figure}


\section{Transitive Reduction in Bidirected Graphs}

\subsection{Definition and Basic Results}

\begin{definition}\label{20}
{\rm
Let $G_{\tau}=(V, E;\tau)$ be a bidirected graph.  Define $\ft(G_{\tau}; H_{\tau}) = \ft(H_{\tau})$ = the transitive closure of $H_{\tau}$ in $G_{\tau}$, where $H_{\tau}$ is a partial graph of $G_{\tau}$.  (We can write only $H_{\tau}$ when the larger graph, here $G_{\tau}$, is obvious.)
}
\end{definition}

\begin{proposition}\label{20a}
Let $H_\tau$ be a partial graph of $G_\tau$.  Then $$\ft(G_{\tau}; H_{\tau}) = \Ft(H_\tau) \cap G_\tau.$$
\end{proposition}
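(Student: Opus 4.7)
The approach is to show that both sides describe the same set of edges by directly unwinding Definitions \ref{20} and \ref{15}. By Definition \ref{20}, the relative closure $\ft(G_\tau; H_\tau)$ consists of the edges $\{x^\alpha, y^\beta\}$ of $G_\tau$ that are witnessed by a b-path $P_{(\alpha, \beta)}(x, y)$ in $H_\tau$. By Definition \ref{15}, $\Ft(H_\tau)$ is the set of edges $\{x^\alpha, y^\beta\}$ on the vertex set of $H_\tau$ that admit such a b-path in $H_\tau$, with no condition on whether the edge lies in $G_\tau$. Intersecting with $G_\tau$ superimposes exactly the extra requirement built into Definition \ref{20}.

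Given this, the plan is to verify the two inclusions separately. For $\ft(G_\tau;H_\tau) \subseteq \Ft(H_\tau) \cap G_\tau$: any edge $e = \{x^\alpha, y^\beta\}$ in $\ft(G_\tau;H_\tau)$ is an edge of $G_\tau$ by construction, and the witnessing b-path in $H_\tau$ places $e$ in $\Ft(H_\tau)$ by Definition \ref{15}. For the reverse inclusion: if $e \in \Ft(H_\tau) \cap G_\tau$, then Definition \ref{15} supplies a b-path in $H_\tau$ from $x^\alpha$ to $y^\beta$, and $e$ already lies in $G_\tau$; so $e$ meets the criteria of Definition \ref{20} and belongs to $\ft(G_\tau;H_\tau)$.

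I do not anticipate any technical obstacle. The only subtlety is interpretive: one must read the phrase ``the transitive closure of $H_\tau$ in $G_\tau$'' in Definition \ref{20} as producing b-paths using only edges of $H_\tau$ while retaining only those closure edges already present in $G_\tau$. The proposition itself effectively certifies this reading, so once it is fixed the equality is essentially a tautology and no further graph-theoretic machinery (switching, signs, or Proposition \ref{bpath}) is needed.
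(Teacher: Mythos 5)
Your proof is correct and follows essentially the same route as the paper's: both inclusions are obtained by unwinding Definitions \ref{20} and \ref{15}, observing that an edge of $\ft(G_\tau;H_\tau)$ lies in $G_\tau$ and is witnessed by a b-path in $H_\tau$ (hence lies in $\Ft(H_\tau)$), and conversely. The interpretive point you flag about reading ``transitive closure of $H_\tau$ in $G_\tau$'' is exactly the reading the paper's own proof adopts, so no gap remains.
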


\begin{proof}
The definition implies that $\Ft(H_\tau) \cap G_\tau \subseteq \ft(G_{\tau}; H_{\tau})$.  

Let $e$ be an edge of $\ft(G_{\tau}; H_{\tau})$ not in $H_\tau$.  The edge $e$ is induced by a bipath $P$ in $H_\tau$.  Thus, $e\in \Ft(E(H_\tau))$.  It follows that $e$ is an edge of $\Ft(H_\tau)$.  Since also $e \in E(G_\tau)$, we conclude that $\ft(G_{\tau}; H_{\tau}) \subseteq \Ft(H_\tau) \cap G_\tau$.
\end{proof}

\begin{definition}\label{21}
{\rm
Let $G_{\tau}=(V, E;\tau)$ be a bidirected graph. A \emph{transitive reduction} of $G_{\tau}$ is a minimal generating set under $\ft$. Thus we define $\Rt(G_{\tau})=(V,\Rt(E); \tau)$ to be a minimal partial graph of $G_{\tau}$ with the property that $\ft(G_{\tau}; \Rt(G_{\tau})) = G_{\tau}$.
We note that $\Rt(G_\tau)$ may not be unique; see Remark \ref{rm2}.
}
\end{definition}

The definitions and Proposition \ref{ft2} immediately imply that $$\ft(\Ft(G_{\tau}); \Rt(G_{\tau})) = \ft(\Ft(G_{\tau}); G_{\tau}) = \Ft(G_{\tau}).$$

\begin{proposition}\label{22}
Let $G_{\tau}$ be a bidirected graph and $\Rt(G_{\tau})$ a transitive reduction.  Then $\Ft(\Rt(G_{\tau}))= \Ft(G_{\tau})$.
\end{proposition}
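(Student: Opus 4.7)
The plan is to prove the equality by two inclusions, both of which should follow from the closure-operator properties established in Proposition \ref{ft2} together with the characterization of $\ft$ from Proposition \ref{20a}.

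First I would handle the easy inclusion $\Ft(\Rt(G_\tau)) \subseteq \Ft(G_\tau)$. Since $\Rt(G_\tau)$ is by definition a partial graph of $G_\tau$, monotonicity of $\Ft$ (Proposition \ref{ft2}(ii)) gives this inclusion immediately.

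For the reverse inclusion $\Ft(G_\tau) \subseteq \Ft(\Rt(G_\tau))$, I would begin by unpacking the definition of transitive reduction. We have $\ft(G_\tau; \Rt(G_\tau)) = G_\tau$, and by Proposition \ref{20a} this says $\Ft(\Rt(G_\tau)) \cap G_\tau = G_\tau$, which is equivalent to the inclusion $G_\tau \subseteq \Ft(\Rt(G_\tau))$ of partial graphs. Now applying the closure operator $\Ft$ to both sides and invoking monotonicity (Proposition \ref{ft2}(ii)) yields $\Ft(G_\tau) \subseteq \Ft(\Ft(\Rt(G_\tau)))$. Finally, idempotence (Proposition \ref{ft2}(iii)) simplifies the right-hand side to $\Ft(\Rt(G_\tau))$, completing the inclusion.

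Combining both inclusions gives $\Ft(\Rt(G_\tau)) = \Ft(G_\tau)$. There is no real obstacle here: once one observes that the definition of $\Rt$ forces $G_\tau$ itself to sit inside $\Ft(\Rt(G_\tau))$, the rest is a formal consequence of $\Ft$ being an abstract closure operator. The only thing to be careful about is distinguishing between $\ft$ (transitive closure relative to $G_\tau$) and $\Ft$ (absolute transitive closure), and ensuring that Proposition \ref{20a} is used to translate from one to the other.
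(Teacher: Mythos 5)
Your proof is correct and follows essentially the same route as the paper: both arguments come down to the observation that the definition of $\Rt$ forces $G_\tau \subseteq \Ft(\Rt(G_\tau))$, after which the closure-operator properties of Proposition \ref{ft2} finish the job. The paper compresses this into a one-line computation with the relative closure $\ft(\Ft(G_\tau);\Rt(G_\tau))$, whereas you spell out the two inclusions explicitly via Proposition \ref{20a}, monotonicity, and idempotence; the content is the same.
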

\begin{proof}
$\Ft(\Rt(G_{\tau}))= \ft(\Ft(G_{\tau}); \Rt(G_{\tau})) = \Ft(G_{\tau})$.
\end{proof}

\begin{proposition}\label{23}
Let $G_{\tau}$ be a bidirected graph.  A partial graph $H_{\tau}$ of $G_{\tau}$ is a transitive reduction $\Rt(G_{\tau})$ if, and only if, it is minimal such that $G_{\tau}\subseteq \Ft(H_{\tau})$.
\end{proposition}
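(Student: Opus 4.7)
The plan is to reduce the two minimality conditions to the same underlying condition on $H_\tau$, and then conclude that the sets of partial graphs over which the minimum is taken coincide.

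First I would invoke Proposition \ref{20a}, which tells us that for any partial graph $H_\tau$ of $G_\tau$,
\[
\ft(G_\tau; H_\tau) = \Ft(H_\tau) \cap G_\tau.
\]
Since $H_\tau$ being a partial graph of $G_\tau$ gives $\Ft(H_\tau) \cap G_\tau \subseteq G_\tau$ automatically, the equation $\ft(G_\tau; H_\tau) = G_\tau$ is equivalent to the reverse inclusion, namely $G_\tau \subseteq \Ft(H_\tau) \cap G_\tau$, which in turn is just $G_\tau \subseteq \Ft(H_\tau)$.

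Thus the property ``$\ft(G_\tau; H_\tau)=G_\tau$'' used in Definition \ref{21} and the property ``$G_\tau \subseteq \Ft(H_\tau)$'' cut out exactly the same family of partial graphs of $G_\tau$. Minimality within the same family of partial graphs is the same notion, so $H_\tau$ is a transitive reduction of $G_\tau$ in the sense of Definition \ref{21} if and only if it is a minimal partial graph of $G_\tau$ satisfying $G_\tau \subseteq \Ft(H_\tau)$. There is no real obstacle here; the only point to be careful about is keeping the ambient graph fixed to $G_\tau$, so that the minimality on both sides is taken within the poset of partial graphs of $G_\tau$ (this is exactly the content of writing $\ft(G_\tau; H_\tau)$ rather than $\Ft(H_\tau)$ in the definition of $\Rt(G_\tau)$).
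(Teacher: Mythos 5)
Your proposal is correct and follows the same route as the paper: both reduce the claim to Proposition \ref{20a}, observing that $\ft(G_\tau;H_\tau)=G_\tau$ and $G_\tau\subseteq\Ft(H_\tau)$ define the same family of partial graphs of $G_\tau$, so their minimal elements coincide. Your write-up merely spells out the equivalence a little more explicitly than the paper does.
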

\begin{proof}
It follows from Proposition \ref{20a} that for $H_\tau$ to be a transitive reduction of $G_\tau$ it is necessary that $G_\tau \subseteq \Ft(H_\tau)$.  It follows that $H_\tau$ is a transitive reduction of $G_\tau$ $\Leftrightarrow$ $H_\tau$ is minimal such that $G_{\tau}\subseteq \Ft(H_{\tau})$.
\end{proof}

\begin{proposition}\label{26}
If $G_{\tau}$ is a connected bidirected graph, then $\Rt(G_{\tau})$ is also connected.
\end{proposition}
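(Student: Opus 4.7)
The plan is to argue by contradiction using the characterization in Proposition \ref{23}, together with the fact that any b-path is an underlying chain in the graph that hosts it.

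Suppose $\Rt(G_\tau)$ is disconnected. Then there is a partition $V = V_1 \sqcup V_2$ with both parts nonempty such that no edge of $\Rt(G_\tau)$ has one end in $V_1$ and the other in $V_2$. Since $G_\tau$ itself is connected on the same vertex set $V$, there exists at least one edge $e = \{x^\alpha, y^\beta\} \in E(G_\tau)$ with $x \in V_1$ and $y \in V_2$. By Proposition \ref{23}, $G_\tau \subseteq \Ft(\Rt(G_\tau))$, so in particular $e \in \Ft(\Rt(G_\tau))$.

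Now I would split into two cases. If $e \in E(\Rt(G_\tau))$, we contradict the assumption that no edge of $\Rt(G_\tau)$ crosses between $V_1$ and $V_2$. Otherwise, by the definition of transitive closure, $e$ is induced by some b-path $P_{(\alpha,\beta)}(x,y)$ entirely contained in $\Rt(G_\tau)$. The underlying chain of this b-path starts at $x \in V_1$ and ends at $y \in V_2$, so as we traverse its successive vertices some consecutive pair must straddle the partition, meaning one of its edges has its two ends in $V_1$ and $V_2$ respectively. But that edge belongs to $\Rt(G_\tau)$, again contradicting the choice of partition.

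I do not anticipate any real obstacle here: the argument is essentially the same as for ordinary directed (or undirected) graphs, and it uses only that a b-path, viewed as a chain, lies in the graph that contains it. The only point to be a little careful about is applying Proposition \ref{23} rather than reasoning directly from the minimality defining $\Rt$, so that the inclusion $G_\tau \subseteq \Ft(\Rt(G_\tau))$ is in hand from the outset and provides the edge $e$ that forces the contradiction.
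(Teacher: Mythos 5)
Your proof is correct and is essentially the paper's argument unpacked: the paper simply observes that the operator $\Ft$ does not change the connected components of a graph (since an edge of $\Ft(H_\tau)$ joining $x$ and $y$ is induced by a b-path, hence a chain, in $H_\tau$), which combined with $G_\tau\subseteq\Ft(\Rt(G_\tau))$ gives the result. Your partition-and-contradiction formulation makes the same key fact explicit, so there is nothing to add.
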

\begin{proof}
The operator $\Ft$ does not change the connected components of a graph.
\end{proof}

\begin{corollary}\label{28}
Let $G_{\tau}$ be a bidirected graph without positive loops.  If it has no bipath of length greater than 1, then $\Rt(G_{\tau})=G_{\tau}$ and every vertex is a source or a sink.
\end{corollary}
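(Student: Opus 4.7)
The plan is to derive both conclusions from a single structural fact: the hypothesis forces every vertex's incident half-edges to share a common sign, which is precisely the defining property of a source or sink.

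I would prove the source/sink claim by contradiction. Suppose $v$ is incident to half-edges of both signs, coming from edges $e_1$ and $e_2$ with $\tau(e_1, v) = +1$ and $\tau(e_2, v) = -1$. If $e_1 = e_2$, this edge is a loop at $v$ with opposite end-signs, so by Definition \ref{6} it has sign $-(+1)(-1) = +1$, contradicting the no-positive-loop hypothesis. Hence $e_1 \neq e_2$, and I would form the chain $P: x_0, e_1, v, e_2, x_2$, where $x_0$ and $x_2$ are the far ends of $e_1$ and $e_2$ respectively (each equal to $v$ if the corresponding edge is a loop). At the interior vertex $v$ we have $W_P(v) = \tau(e_1, v) + \tau(e_2, v) = 0$, so $P$ is a b-walk. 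I would then apply Proposition \ref{bpath} to conclude that $P$ is actually a b-path of length $2$, contradicting the hypothesis. This application requires a short case split according to whether $e_1$ or $e_2$ is a loop at $v$; in the loop subcases, the exclusion of positive loops forces any loop at $v$ to be negative, so its two half-edges at $v$ share a common sign, and condition (b) of Proposition \ref{bpath} can then be checked.

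For the identity $\Rt(G_\tau) = G_\tau$: since no b-path in $G_\tau$ has length greater than $1$, every b-path is a single edge, and so $\Ft(G_\tau) = G_\tau$. Any partial graph $H_\tau$ of $G_\tau$ inherits this property, because a chain in $H_\tau$ is also a chain in $G_\tau$ with the same half-edge orientations; hence $\Ft(H_\tau) = H_\tau$. By Proposition \ref{23}, $\Rt(G_\tau)$ is the minimal partial graph $H_\tau$ of $G_\tau$ satisfying $G_\tau \subseteq \Ft(H_\tau) = H_\tau$, which forces $H_\tau = G_\tau$.

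The main obstacle is the case analysis in the source/sink step when one or both of $e_1, e_2$ is a loop at $v$. When both are loops, the chain $P$ traces $v, e_1, v, e_2, v$ and visits $v$ three times, so verifying that $P$ meets the non-repetition condition of Proposition \ref{bpath}(b) depends on the distinct labeled orientations at those visits, which are in turn controlled by the fact that each loop must be negative. This is exactly where the no-positive-loop hypothesis earns its keep; without it a positive loop at $v$ would already present a local configuration the corollary could not accommodate.
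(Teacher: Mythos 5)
Your overall strategy coincides with the paper's: for the source/sink claim, take a vertex $v$ carrying half-edges of both signs, rule out $e_1=e_2$ via the no-positive-loop hypothesis, and exhibit the two-edge chain through $v$ as a b-path of length $2$; for $\Rt(G_\tau)=G_\tau$, observe that an edge missing from a transitive reduction would have to be implied by a b-path of length at least $2$. The paper's own proof simply asserts that the two-edge chain ``is a b-path of length 2,'' so your attempt to actually verify this through Proposition \ref{bpath} is more careful in intent.

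However, the verification does not go through as you claim in the loop subcases, which is precisely the spot you flagged as the main obstacle. Write the chain as $P: x_0,e_1,v,e_2,v$ with $\tau(e_1,v)=+1$ and $\tau(e_2,v)=-1$, and suppose $e_2$ is a loop at $v$; since positive loops are excluded, both half-edges of $e_2$ at $v$ carry the sign $-1$. Then in the notation of Proposition \ref{bpath} the signed vertices $x_1^{\alpha_1}$ and $x_2^{\alpha_2}$ are both $v^{-1}$, and since $(1,2)=(1,k)\neq(0,k)$, condition (b) fails: the chain as you oriented it is not certified to be a b-path. When only $e_2$ is a loop the repair is to traverse the loop first, i.e., use $v,e_2,v,e_1,x_0$, which does satisfy (a) and (b). But when both $e_1$ and $e_2$ are negative loops at $v$ (one with both half-edges $+1$, the other with both $-1$), neither orientation satisfies (b), so Proposition \ref{bpath} cannot deliver the contradiction at all; there you must fall back on Definition \ref{11} directly and check minimality by hand (no single edge of the chain is a b-walk from $v^{+1}$ to $v^{-1}$, so the length-$2$ chain is minimal and hence a b-path). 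So the case split you postponed is not routine, and as written the argument has a hole exactly where you predicted the difficulty would lie. The remainder of the proof, including the reduction of $\Rt(G_\tau)=G_\tau$ to the identity $\Ft(H_\tau)=H_\tau$ for partial graphs $H_\tau$ via Proposition \ref{23}, is sound and matches the paper's reasoning.
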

\begin{proof}
Since $\Rt(G_{\tau})$ is a partial graph  of $G_{\tau}$, it is enough to prove that each edge $e$ in $G_{\tau}$ is in $\Rt(G_{\tau})$. Assume that there exists an edge $e=\{x^{\alpha},y^{\beta}\} \in G_{\tau} - E(\Rt(G_{\tau}))$. According to the definitions of transitive reduction and transitive closure, there exists a bipath from $x^{\alpha}$ to $y^{\beta}$ in  $G_{\tau}-\{e\}$ with length $k\geq 2$, which is absurd.

If a vertex $x$ is neither a source nor a sink, it has incident half-edges $(e,x)$ and $(f,x)$ with $\tau(e,x)=+1$ and $\tau(f,x)=-1$.  Then $ef$ is a bipath of length 2, which is absurd, or $e=f$, which implies that $e$ is a positive loop, which is also absurd.
\end{proof}

We can characterize the graphs in Corollary \ref{28} as follows (see figure \ref{FIG8}):
\begin{itemize}
\item $G_{\tau} = (V_{+1} \cup V_{-1}, E; \tau)$.  ($V_{+1}$ and $V_{-1}$ are the sets of sources and sinks; see Definition \ref{1ssi}.)
\item $G_\tau$ is antibalanced (Definition \ref{5}).  (Thus, if $G_\tau$ is balanced, then it is bipartite.)  
The edges connecting a vertex of $V_{+1}$ to a vertex of  $V_{-1}$ are positive.
The edges connecting two vertices of the same set are negative.
\end{itemize}

\setlength{\unitlength}{.8mm}

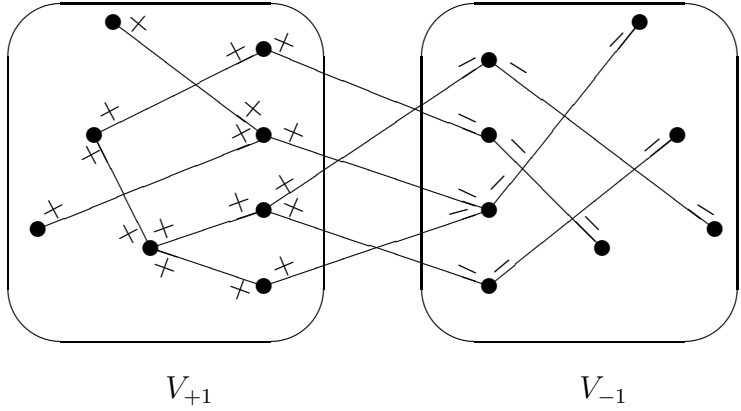
\begin{figure}[t]
\begin{center}
\begin{picture}(90,55)(0,-15)
\multiput(30,0)(0,10){3}{\circle*{2}}
\put(30,31.5){\circle*{2}}
\multiput(60,0)(0,10){4}{\circle*{2}}
\multiput(15,5)(60,0){2}{\circle*{2}}
\multiput(0,7.5)(90,0){2}{\circle*{2}}
\put(7.5,20){\circle*{2}}
\put(85,20){\circle*{2}}
\multiput(10,35)(70,0){2}{\circle*{2}}
\put(30,0){\line(-3,1){15}}
\put(15,5){\line(3,1){15}}
\put(15,5){\line(-1,2){7.5}}
\put(0,7.5){\line(5,2){30}}
\put(7.5,20){\line(2,1){22.5}}
\put(10,35){\line(4,-3){20}}
\put(30,31.5){\line(5,-2){30}}
\put(30,10){\line(3,2){30}}
\multiput(30,20)(0,-10){2}{\line(3,-1){30}}
\put(30,0){\line(3,1){30}}
\put(60,30){\line(4,-3){30}}
\put(60,20){\line(1,-1){15}}
\put(60,10){\line(4,5){20}}
\put(60,0){\line(5,4){25}}
\put(17,15){\oval(42,45)}
\put(72,15){\oval(42,45)}
\multiput(0,8.5)(25,10){2}{\rotatebox{28}{$+$}}
\multiput(5.1,18.25)(5,-10.4){2}{\rotatebox{-62}{$+$}}
\multiput(14.75,6)(10,3.5){2}{\rotatebox{20}{$+$}}
\multiput(14.9,2)(10,-3){2}{\rotatebox{-20}{$+$}}
\multiput(7.5,21.5)(16.75,8){2}{\rotatebox{28}{$+$}}
\multiput(11.25,35)(15.25,-11.25){2}{\rotatebox{-40}{$+$}}
\multiput(32,10)(0,10){2}{\rotatebox{-23}{$+$}}
\put(30.7,32){\rotatebox{-23}{$+$}}
\put(30.7,1){\rotatebox{23}{$+$}}
\put(30.7,11.75){\rotatebox{31}{$+$}}
\multiput(55,2)(0,10){2}{\rotatebox{-25}{$-$}}
\put(54,11.25){\rotatebox{195}{$-$}}
\put(55,22){\rotatebox{-25}{$-$}}
\put(55,28){\rotatebox{28}{$-$}}
\multiput(61.75,29)(24.9,-19){2}{\rotatebox{-32}{$-$}}
\multiput(61.75,19)(9.75,-10.25){2}{\rotatebox{-45}{$-$}}
\multiput(59,11.75)(15.8,19){2}{\rotatebox{46}{$-$}}
\multiput(59.5,0.75)(20,16){2}{\rotatebox{40}{$-$}}
\put(14,-15){$V_{+1}$}
\put(70,-15){$V_{-1}$}
\end{picture}
\end{center}
\caption{The form of a graph that satisfies the hypotheses of Corollary \ref{28}.}\label{FIG8}
\end{figure}

\begin{theorem}\label{29}
\begin{enumerate}[{\rm(i)}]
\item The transitive reduction $\Rt(G_{\tau})$ is balanced if, and only if, $G_{\tau}$ is balanced.
\item $\Rt(G_{\tau})$ is all positive if, and only if, $G_{\tau}$ is all positive.
\end{enumerate}
\end{theorem}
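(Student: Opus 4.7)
The plan is to deduce both equivalences as essentially immediate consequences of two earlier results: Proposition~\ref{19} (the transitive closure of an all-positive, respectively balanced, bidirected graph is again all positive, respectively balanced) and Proposition~\ref{22} ($\Ft(\Rt(G_\tau)) = \Ft(G_\tau)$). The auxiliary observation I will use repeatedly is that both properties in question are inherited by partial graphs: if every edge of a bidirected graph is positive, so is every edge of any partial graph, and if every cycle is positive, then every cycle of any partial graph (being itself a cycle of the larger graph) is positive.

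For the forward directions of (i) and (ii), I would note that $\Rt(G_\tau)$ is by definition a partial graph of $G_\tau$. Hence if $G_\tau$ is all positive (respectively balanced), the inheritance observation gives the same property for $\Rt(G_\tau)$ with no further work.

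For the backward directions, suppose $\Rt(G_\tau)$ is all positive, respectively balanced. Apply Proposition~\ref{19} to $\Rt(G_\tau)$ to conclude that $\Ft(\Rt(G_\tau))$ is all positive, respectively balanced. By Proposition~\ref{22}, $\Ft(\Rt(G_\tau)) = \Ft(G_\tau)$, so $\Ft(G_\tau)$ has the property. Since $G_\tau$ is a partial graph of $\Ft(G_\tau)$ by Proposition~\ref{ft2}(i), the inheritance observation transfers the property back to $G_\tau$, completing the proof.

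There is no real obstacle here: once Propositions~\ref{19} and~\ref{22} are in place, the theorem is essentially a corollary, with the only substantive ingredient being the straightforward fact that "all positive" and "balanced" are hereditary downward under the partial graph relation. The proof will therefore be very short, consisting of four brief implications organized as "$G_\tau \mathrel{\Rightarrow} \Rt(G_\tau)$" by partial-graph inheritance and "$\Rt(G_\tau) \mathrel{\Rightarrow} \Ft(\Rt(G_\tau)) = \Ft(G_\tau) \mathrel{\Rightarrow} G_\tau$" by Propositions~\ref{19}, \ref{22}, and~\ref{ft2}(i).
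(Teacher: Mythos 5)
Your proposal is correct and follows essentially the same route as the paper: both rest on Proposition~\ref{19}, the identity $\Ft(\Rt(G_{\tau}))=\Ft(G_{\tau})$ from Proposition~\ref{22}, and the downward heredity of ``all positive'' and ``balanced'' under the partial-graph relation. The only cosmetic difference is that you make the heredity step explicit (and use it directly for the forward implication via $\Rt(G_{\tau})\subseteq G_{\tau}$), whereas the paper leaves it implicit in the biconditionals it draws from Proposition~\ref{19}.
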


\begin{proof}
We conclude from Proposition \ref{19} that $\Rt(G_{\tau})$ is all positive (resp., balanced) $\Leftrightarrow$ $\Ft(\Rt(G_{\tau}))$ is all positive (resp., balanced) and that $G_{\tau}$ is all positive (resp., balanced) $\Leftrightarrow$ $\Ft(G_{\tau})$ is all positive (resp., balanced).  By Proposition \ref{22}, $\Ft(\Rt(G_{\tau})) = \Ft(G_{\tau})$.  The result follows.
\end{proof}

Theorem \ref{29}(ii) is important because all-positive bidirected graphs are the usual directed graphs.  Thus, it says that the transitive reduction of a directed graph, in our definition of transitive reduction, is a directed graph.
The diagram below, obtained from the results above, shows the stronger statement that the classical notion of transitive reduction for directed graphs is a particular case of our notion for bidirected graphs.

\setlength{\unitlength}{.4mm}

\vspace*{1.3cm}
\begin{figure}[h]
\begin{center}
\begin{picture}(200,50)(0,-15)
\put(-28,-13){{$G$ digraph}}
\put(23,-10){\vector(1,0){125}}
\put(47,-20) {{transitive reduction}}
\put(152,-13){{$\Rt(G)$ digraph}}
\put(-36,60){{$G_{\tau}$ balanced}}
\put (23,24){\vector(1,0){125}}
\put(47,12){{transitive reduction}}
\put (23,62){\vector(1,0){125}}
\put(47,50){{transitive reduction}}
\put(152,60){{$\Rt(G_{\tau})$ balanced}}
\put(10,20){\vector(0,-1){22}}
\put(-33,22){{$G_{\tau}$ positive}}
\put(10,55){\vector(0,-1){22}}
\put(155,20){\vector(0,-1){22}}
\put(152,22){{$\Rt(G_{\tau})$ positive}}
\put(155,55){\vector(0,-1){22}}
\put(-47,42){{{by switching}}}
\put(162.5,42){{{by switching}}}
\put(-54,5){{{by orientation}}}
\put(162.5,5){{{by orientation}}}
\end{picture}
\end{center}
\end{figure}

\begin{definition}\label{RE}
{\rm
Let $\RE(G_\tau)$ be the set of edges $e$ such that $e$ is in the transitive closure of $G_\tau-\{e\}$.  
We can say that these edges are \emph{redundant edges} in $G_{\tau}$.  
}
\end{definition}

\begin{lemma}\label{partialreduction}
If $H_\tau$ is a partial graph of $G_\tau$, then $\RE(H_\tau) \subseteq \RE(G_\tau)$.
\end{lemma}

\begin{proof}
If $e \in \RE(H_\tau)$, then $e \in \RE(G_\tau)$ by the definition of $\RE$.
\end{proof}

Note that two parallel edges (Definition \ref{1a}) are both redundant.  Thus, it is necessary to exclude parallel edges in the following proposition.

\begin{proposition}\label{Rt no bicircuits}
For a bidirected graph without bicircuits and without parallel edges, the graph $\Rt(G_{\tau})$ is unique.
It is obtained from $G_{\tau}$ by removing every redundant edge.  
\end{proposition}

\begin{proof}
Since there are no parallel edges, $e \in \RE(G_\tau)$ $\Leftrightarrow$ $e$ is in the transitive closure of a bipath of length at least $2$.  

We prove first that, if $e, f \in \RE(G_\tau)$, then $f \in \RE(G_\tau - \{e\})$.  Suppose $e=\{x^\alpha,y^\beta\}$ is implied by a bipath $P_0=P_{(\alpha,\beta)}(x,y)$ and $f=\{z^\gamma,w^\delta\}$ is implied by a bipath $Q_0=Q_{(\gamma,\delta)}(z,w)$.  

If $e \notin Q_0$, then $Q_0$ is a bipath in $G_\tau - \{e\}$ that implies $f$.  

If $e \in Q_0$ but $f \notin P_0$, then $Q_0 = Q_1 e Q_2$ where, by choice of notation, $e$ appears as $(x^\alpha,y^\beta)$ in that order, so that $Q_1 = P_{(\gamma,-\alpha)}(z,x)$ and $Q_2 = P_{(-\beta,\delta)}(y,w)$.  Replace $Q_0$ by $Q_1 P_0 Q_2$.  This is a bichain from $z^\gamma$ to $w^\delta$ so it contains a bipath $P$ from $z^\gamma$ to $w^\delta$, in $G_\tau - \{e\}$, and $P$ implies $f$.

If $e \in Q_0$ and $f \in P_0$, then $Q_0 = Q_1 e Q_2$ where $e$, $Q_1$ and $Q_2$ are as in the previous case, and $P_0 = P_1 f P_2$ where $f$ appears in $P_0$ as either $(z^\gamma,w^\delta)$ or $(w^\delta,z^\gamma)$.  
Suppose the first possibility.  Then $P_1 = P_{(\alpha,-\gamma)}(x,z)$ so $P_1Q_1$ is a bichain from $x^\alpha$ to $x^{-\alpha}$; therefore $P_1Q_1$ contains a bicircuit, which is impossible.  
Now suppose the second possibility and let $P^*$ denote the reverse of the bipath $P$ (Definition \ref{defreverse}).  Then $P_1 Q_1 P_2^* Q_2^*$ is a bichain from $x^\alpha$ to $x^{-\alpha}$; therefore it contains a bicircuit, which is impossible.  Therefore, this case cannot occur.

We conclude that $f \in \RE(G_\tau - \{e\})$ for every edge $f \in \RE(G_\tau)$, $f \neq e$.  Therefore, $\RE(G_\tau - \{e\}) \supseteq \RE(G_\tau) - \{e\}$.  
Since $G_\tau - \{e\}$ is a partial graph of $G_\tau$, $\RE(G_\tau) - \{e\}) \subseteq \RE(G_\tau)$ so $\RE(G_\tau - \{e\}) = \RE(G_\tau) - \{e\}$.  
By induction, $\RE(G_\tau - \RE(G_\tau)) = \RE(G_\tau) - \RE(G_\tau) = \emptyset$.  
We also conclude that $f \in \Ft(G_\tau - \{e\})$ and by induction that $\RE(G_\tau) \subseteq \Ft(G_\tau - \RE(G_\tau))$.  
Therefore, $\Rt(G_\tau) = G_\tau - \RE(G_\tau)$.  
This is unique.
\end{proof}

Figure \ref{FIG6} shows that the transitive closure of the bipath $P_{(-,-)}(2,3) : 2^{-},1,3^{-}$ contains the edge $\{2^{-}, 3^{-}\}$ which is a redundant edge.

\setlength{\unitlength}{.8mm}

\begin{center}
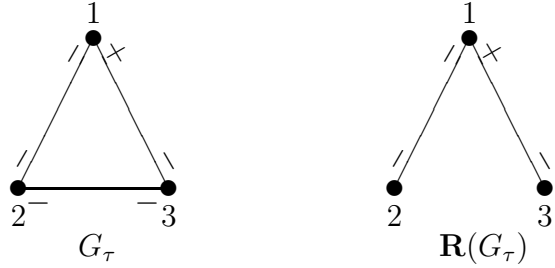
\begin{figure}[h]
\begin{picture}(80,28)(0,0)
\multiput(0,0)(20,0){2}{\circle*{2}}
\put(10,20){\circle*{2}}
\put(0,0){\line(1,0){20}}
\put(0,0){\line(1,2){10}}
\put(20,0){\line(-1,2){10}}
\put(-1,-5){$2$}
\put(19,-5){$3$}
\put(9,22){$1$}
\multiput(1,-3)(14.5,0){2}{\rotatebox{0}{$-$}}
\multiput(-1.5,2)(6.8,14){2}{\rotatebox{60}{$-$}}
\put(11,19){\rotatebox{-60}{$+$}}
\put(18,4.5){\rotatebox{-60}{$-$}}
\put(8,-9){$G_{\tau}$}
\hspace*{5cm}
\multiput(0,0)(20,0){2}{\circle*{2}}
\put(10,20){\circle*{2}}
\put(0,0){\line(1,2){10}}
\put(20,0){\line(-1,2){10}}
\put(-1,-5){$2$}
\put(19,-5){$3$}
\put(9,22){$1$}
\multiput(-1.5,2)(6.8,14){2}{\rotatebox{60}{$-$}}
\put(11,19){\rotatebox{-60}{$+$}}
\put(18,4.5){\rotatebox{-60}{$-$}}
\put(6,-9){$\Rt(G_{\tau})$}
\end{picture}
\vspace{.5cm}
\caption{$\{2^{-},3^{-}\}$ is a redundant edge.}\label{FIG6}
\end{figure}
\end{center}

\begin{remark}\label{rm2}
{\rm 
We show an example in which the transitive reduction is unique, and an example in which it is not unique. 
If $C_{1}$ and $C_{2}$ are two symmetrical bicircuits, that is, $\{x^\alpha,y^\beta\}\in E(C_1) \Leftrightarrow \{x^{-\alpha},y^{-\beta}\}\in E(C_2)$), then $\Ft(C_{1})=\Ft(C_{2})= \Ft(G_{\tau})$. 
Hence in figure \ref{FIG7} $G_\tau$ has only one transitive reduction, $C_1$, but $\Ft(G_{\tau})$ has both $C_1$ and $C_2$ as transitive reductions.
}
\end{remark}

\begin{center}
\begin{figure}[h]
\begin{picture}(140,28)(-5,0)
\multiput(0,0)(20,0){2}{\circle*{2}}
\put(10,20){\circle*{2}}
\put(0,0){\line(1,0){20}}
\put(0,0){\line(1,2){10}}
\put(20,0){\line(-1,2){10}}
\put(-1,-5){$2$}
\put(19,-5){$3$}
\put(9,22){$1$}
\multiput(2,-3)(12.5,0){2}{\rotatebox{0}{$+$}}
\multiput(-1.5,2)(6.8,12.5){2}{\rotatebox{60}{$-$}}
\put(11,19){\rotatebox{-60}{$+$}}
\put(18,4.5){\rotatebox{-60}{$-$}}
\curve(0,0, -1,11, 10,20)
\multiput(-4.5,1.5)(9,18){2}{\rotatebox{112}{$+$}}
\put(8,-10){$G_{\tau}$}
\hspace*{3cm}

\multiput(0,0)(20,0){2}{\circle*{2}}
\put(10,20){\circle*{2}}
\put(0,0){\line(1,0){20}}
\put(0,0){\line(1,2){10}}
\put(20,0){\line(-1,2){10}}
\put(-1.5,-5){$2$}
\put(20,-5){$3$}
\put(9,22){$1$}
\multiput(3,1)(11,0){2}{\rotatebox{0}{$+$}}
\multiput(-1.5,2)(6.8,12.5){2}{\rotatebox{60}{$-$}}
\put(11,17.5){\rotatebox{-60}{$+$}}
\put(18,4.5){\rotatebox{-60}{$-$}}
\curve(0,0, -1,11, 10,20)
\curve(10,20, 22,12.5, 20,0)
\curve(0,0, 11,-5, 20,0)
\multiput(-4.5,1.5)(9,18){2}{\rotatebox{112}{$+$}}
\put(0,-3.5){\rotatebox{140}{$-$}}
\put(16.5,-5.25){\rotatebox{50}{$-$}}
\put(12,20){\rotatebox{-30}{$-$}}
\put(21,2){\rotatebox{-20}{$+$}}
\put(4,-10){$\Ft(G_{\tau})$}
\hspace*{3cm}

\multiput(0,0)(20,0){2}{\circle*{2}}
\put(10,20){\circle*{2}}
\put(0,0){\line(1,0){20}}
\put(0,0){\line(1,2){10}}
\put(20,0){\line(-1,2){10}}
\put(-1,-5){$2$}
\put(19,-5){$3$}
\put(9,22){$1$}
\multiput(2,-3)(12.5,0){2}{\rotatebox{0}{$+$}}
\multiput(-1.5,2)(6.8,14){2}{\rotatebox{60}{$-$}}
\put(11,19){\rotatebox{-60}{$+$}}
\put(18,4.5){\rotatebox{-60}{$-$}}
\put(-2,-10){$C_{1}=\Rt(G_\tau)$}
\hspace*{3cm}

\multiput(0,0)(20,0){2}{\circle*{2}}
\put(10,20){\circle*{2}}
\put(0,0){\line(1,0){20}}
\put(0,0){\line(1,2){10}}
\put(20,0){\line(-1,2){10}}
\put(-1,-5){$2$}
\put(19,-5){$3$}
\put(9,22){$1$}
\multiput(2,-3)(12.5,0){2}{\rotatebox{0}{$-$}}
\multiput(-1.5,2)(6.8,14){2}{\rotatebox{60}{$+$}}
\put(11,19){\rotatebox{-60}{$-$}}
\put(18,4.5){\rotatebox{-60}{$+$}}
\put(-5,-10){$C_{2}=\Rt(\Ft(G_\tau))$}
\end{picture}
\vspace{.8cm}
\caption{Example for Remark \ref{rm2}.  $C_1$ is an $\Rt(G_\tau)$ and an $\Rt(\Ft(G_\tau))$.  $C_2$ is an $\Rt(\Ft(G_\tau))$, but not an $\Rt(G_\tau)$ because it is not contained in $G_\tau$.  For legibility, in $\Ft(G_\tau)$ we do not show the positive loops that exist at every vertex.}\label{FIG7}
\end{figure}
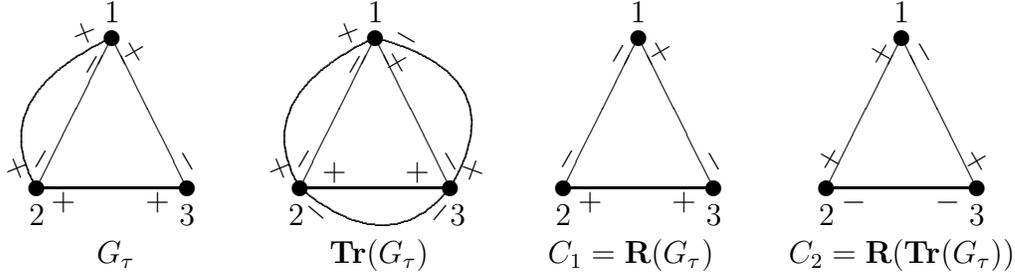
\end{center}

Let $G_{\tau}$ be a bidirected graph and $E = \{e_{1}, e_{2}, \ldots, e_{n}\}$ its set of edges.   Assume $E$ is linearly ordered by a linear ordering $<$ in index order, i.e., $e_i<e_j \Leftrightarrow i<j$.  
Let $(G_{\tau})_{i}$ be a family of graphs constructed from $G_{\tau}$ as follows:
\begin{align*}
(G_{\tau})_{0}&=G_{\tau} \text{ and }\\
(G_{\tau})_{i}&=\begin{cases}
(G_{\tau})_{i-1}-e_{i} &\text{ if } e_{i}= \{x^{\alpha}, y^{\beta}\}\text{ is implied by transitive closure}\\
&\text{ of  a bipath } P_{(\alpha,\beta)}(x, y)\text{ in }(G_{\tau})_{i-1}-\{e_i\},\\
(G_{\tau})_{i-1} &\text{ otherwise.}
\end{cases} 
\end{align*}
We put 
$$
S_{<}(G_{\tau})=\{ e_i \in E : e_{i}\in (G_{\tau})_{i-1} \text{ and } e_i \notin (G_{\tau})_{i}\}.
$$ 

\begin{proposition}\label{24}
Let $G_{\tau}$ be a bidirected graph. 
For each linear ordering $<$ of $E$, $G_{\tau} - S_{<}(G_{\tau})$ is a transitive reduction of $G_\tau$.  
Conversely, if\/ $\Rt(G_{\tau})=(V,\Rt(E); \tau)$ is a transitive reduction of $G_{\tau}$, then $\Rt(G_\tau)=G_\tau-S_{<}(G_{\tau})$ for some linear ordering $<$ of $E$.
\end{proposition}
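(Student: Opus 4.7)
My plan is to handle both directions directly from the deletion algorithm, using two tools that follow from earlier results. First, $\ft(G_\tau;\cdot)$ is idempotent: combining Proposition \ref{20a} with Proposition \ref{ft2}(iii) (applied both to $H_\tau$ and to $\ft(G_\tau;H_\tau)$) shows $\ft(G_\tau;\ft(G_\tau;H_\tau))=\ft(G_\tau;H_\tau)$. Second, the property ``$e$ is implied by a b-path in $H_\tau$'' is monotone in $H_\tau$, because any b-path lying in $H_\tau$ remains a b-path in every partial graph containing $H_\tau$.

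For the first direction I will show $(G_\tau)_n = G_\tau - S_<(G_\tau)$ is a transitive reduction. By induction on $i$ I will prove $\ft(G_\tau;(G_\tau)_i)=G_\tau$: if $e_i$ is deleted at step $i$, then $e_i \in \ft(G_\tau;(G_\tau)_i)$ by construction, so $(G_\tau)_{i-1}\subseteq \ft(G_\tau;(G_\tau)_i)$, and applying $\ft(G_\tau;\cdot)$ together with the induction hypothesis and idempotence yields $G_\tau \subseteq \ft(G_\tau;(G_\tau)_i)$. For minimality, I will pick any $e_i\in (G_\tau)_n$; since $e_i$ was retained at step $i$, it is not implied by any b-path in $(G_\tau)_{i-1}-\{e_i\}$, and since $(G_\tau)_n-\{e_i\}\subseteq (G_\tau)_{i-1}-\{e_i\}$, monotonicity gives the same for $(G_\tau)_n-\{e_i\}$; Proposition \ref{20a} then yields $e_i\notin \ft(G_\tau;(G_\tau)_n-\{e_i\})$, so $(G_\tau)_n-\{e_i\}$ fails to $\ft$-generate $G_\tau$.

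For the converse, given a transitive reduction $\Rt(G_\tau)$ I will choose a linear ordering $<$ that lists the edges of $E-\Rt(E)$ first (in any order) and those of $\Rt(E)$ last. While processing $e_i\in E-\Rt(E)$, the b-path in $\Rt(G_\tau)$ that implies $e_i$ lies inside $(G_\tau)_{i-1}-\{e_i\}$, so $e_i$ is deleted. The main obstacle is then the claim that no edge of $\Rt(E)$ is ever deleted: if $e_{j_0}$ were the first such deletion, then $(G_\tau)_{j_0-1}=\Rt(G_\tau)$ (since all of $E-\Rt(E)$ has been removed in the first block and no $\Rt(E)$ edge has been touched yet), so $e_{j_0}$ would be implied by a b-path in $\Rt(G_\tau)-\{e_{j_0}\}$, and idempotence together with Proposition \ref{20a} would give $\ft(G_\tau;\Rt(G_\tau)-\{e_{j_0}\})=G_\tau$, contradicting the minimality of $\Rt(G_\tau)$. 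Hence $G_\tau - S_<(G_\tau)=\Rt(G_\tau)$ for this ordering.
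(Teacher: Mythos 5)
Your proof is correct and follows essentially the same route as the paper's: an induction showing each deletion step preserves $\ft$-generation, minimality via the retention condition plus monotonicity, and for the converse an ordering that places $E-\Rt(E)$ first so the algorithm strips exactly those edges. The only cosmetic difference is that you isolate idempotence and monotonicity of $\ft(G_\tau;\cdot)$ as explicit lemmas where the paper appeals directly to $\Ft^m=\Ft$, which makes the same argument slightly more transparent.
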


\begin{proof}
Assume a linear ordering $<$ of $E$.  By construction, if $e_i \in S_{<}(G_\tau)$, then $e_i \in \Ft((G_{\tau})_{i})$.  
Let $m=|E|$; then $\Ft(G_{\tau}) = \Ft^m(G_{\tau} - S_{<}(G_{\tau}))$ (the $m$-times iterate of $\Ft$) $= \Ft(G_{\tau} - S_{<}(G_{\tau}))$ by Proposition \ref{ft2}.  
By Proposition \ref{20a}, $\ft(G_{\tau}; G_{\tau} - S_{<}(G_{\tau})) = G_\tau$ since $G_\tau \subseteq \Ft(G_{\tau}) = \Ft(G_{\tau} - S_{<}(G_{\tau}))$.  
If $G_\tau-S_{<}(G_{\tau})$ were not a minimal partial graph that generates $G_\tau$ under $\Ft$, then there would be an edge $e_j \in E - S_{<}(G_{\tau})$ such that $e_j$ is implied by a bipath $P_{(\alpha,\beta)}(x,y)$ in $G_{\tau} - S_{<}(G_{\tau})-e_j$.  
This bipath is in $(G_\tau)_{j-1}-e_j$ so by construction $e_j \notin (G_\tau)_{j}$, therefore $e_j \in S_{<}(G_{\tau})$, which is absurd.  Therefore $G_\tau-S_{<}(G_{\tau})$ is a transitive reduction of $G_\tau$.

Suppose $\Rt(G_\tau)$ is a transitive reduction of $G_\tau$.  Let $S = E - E(\Rt(G_\tau))$.  Every edge in $S$ is implied by a bipath in $\Rt(G_\tau)$.  Linearly order $E$ by $<$ so that $S=\{e_1,\ldots,e_k\}$ is initial in the ordering.  Then at step $i\leq k$ of the construction of $S_{<}(G_\tau)$, edge $e_i$ is implied by a bipath in $(G_{\tau})_{i-1}-\{e_i\}$ so $e_i \in S_{<}(G_\tau)$; but at step $i>k$, $(G_{\tau})_{i} = \Rt(G_\tau)$, which has no such bipath because of minimality of $\Rt(G_\tau)$.
\end{proof}

\begin{corollary}\label{25}
Let $G_{\tau}$ be a bidirected graph and $<$ a linear ordering of $E(G_\tau)$.  If $S_{<}(G_{\tau})=\emptyset$, then $\Rt(G_{\tau})=G_{\tau}$.
\end{corollary}

\begin{corollary}\label{27}
If $P_{(\alpha,\beta)}(x,y)$ is a bipath, then
$\Rt(P_{(\alpha,\beta)}(x, y))=P_{(\alpha,\beta)}(x, y).$
\end{corollary}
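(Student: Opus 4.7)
The plan is a proof by contradiction that exploits the minimality clause (iv) of Definition \ref{11}. Write $P = P_{(\alpha,\beta)}(x,y) : x_0 e_1 x_1 \ldots e_k x_k$ with $x_0=x$ and $x_k=y$, and, as in Proposition \ref{bpath}, set $e_m = \{x_{m-1}^{\alpha_{m-1}}, x_m^{\beta_m}\}$. Suppose some edge $e_m$ failed to lie in $\Rt(P)$. Then by Propositions \ref{20a} and \ref{23}, $e_m \in \Ft(\Rt(P)) \subseteq \Ft(P - \{e_m\})$, so there is a b-path $Q = Q_{(\alpha_{m-1},\beta_m)}(x_{m-1},x_m)$ whose edges are drawn entirely from $E(P)-\{e_m\}$.

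The central construction is to splice $Q$ into $P$ in place of $e_m$:
$$P' \;=\; (x_0\,e_1\,\ldots\,e_{m-1}\,x_{m-1}) \cdot Q \cdot (x_m\,e_{m+1}\,\ldots\,e_k\,x_k).$$
I would then verify that $P'$ is a b-walk from $x^\alpha$ to $y^\beta$ by checking $W_{P'}$ at each intermediate position along the chain. At positions inside the two $P$-segments, $W_{P'}$ agrees with $W_P$ and vanishes; at positions inside $Q$, $W_{P'}$ agrees with $W_Q$ and vanishes (even when $Q$ revisits a vertex already appearing in a $P$-segment, since $W$ is defined position-by-position along the chain). At the two junction positions $x_{m-1}$ and $x_m$, the pass-throughs are $\beta_{m-1}+\alpha_{m-1}$ and $\beta_m+\alpha_m$ respectively, each zero by condition (a) of Proposition \ref{bpath}, because the first and last edges of $Q$ carry biorientations $\alpha_{m-1}$ at $x_{m-1}$ and $\beta_m$ at $x_m$. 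The boundary cases $m=1$ and $m=k$ are easier, as one junction collapses onto an endpoint whose required biorientation is supplied automatically by $Q$.

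By construction $E(P') \subseteq E(P) - \{e_m\}$, strictly contained in $E(P)$, and $P'$ has the prescribed endpoints $x^\alpha$ and $y^\beta$. This directly contradicts the minimality stipulated in Definition \ref{11}(iv). If one prefers to produce a b-path rather than a mere b-walk witnessing the contradiction, one can iteratively excise any loop between two occurrences of the same signed-vertex state (exactly as in the proof of Proposition \ref{bpath}) to obtain a b-path $P''$ with $E(P'') \subseteq E(P') \subsetneq E(P)$. Either way, $P$ is not a minimal b-walk from $x^\alpha$ to $y^\beta$, contradicting that $P$ is a b-path. Therefore no edge of $P$ can be omitted, and $\Rt(P) = P$.

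The most delicate point is the junction verification in the splicing step: although vertices of $Q$ may coincide with intermediate vertices of $P_1$ or $P_2$, the position-local definition of $W$ means only the two junctions require real care, and it is precisely condition (a) of Proposition \ref{bpath}, forcing the incoming and outgoing biorientations at every intermediate vertex of $P$ to be opposite, that makes those two pass-throughs cancel.
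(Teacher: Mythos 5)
Your splicing construction, and the check that the concatenation $P'=P_1QP_2$ satisfies conditions (i)--(iii) of Definition \ref{11}, are correct (the paper performs the same kind of concatenation check in the proof of Proposition \ref{ft2}); and you are right that the paper's own proof --- the bare assertion that $S_{<}(P_{(\alpha,\beta)}(x,y))=\emptyset$, which gives the result via Corollary \ref{25} --- leaves precisely this point unjustified. The gap is in your final step. The walk $P'$ is not a sub-chain of $P$: the edges of $Q$ appear out of order, and edges of $P_1$ or $P_2$ may be repeated. The minimality in Definition \ref{11}(iv), as the paper actually uses it (in the proof of Proposition \ref{bpath} non-minimality is taken to mean that some b-path ``whose edges are some of the edges of $P$ in the same order as in $P$'' exists, and in the example following Proposition \ref{38} b-paths of lengths $1$ and $4$ coexist between the same signed endpoints), is minimality with respect to sub-chains, not with respect to the edge sets of arbitrary b-walks. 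Hence producing a b-walk $P'$ with $E(P')\subsetneq E(P)$ contradicts nothing.

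This is not a formal quibble; the step genuinely fails. Let $P: x_0\,e_1\,x_1\,e_2\,x_2\,e_3\,x_1\,e_4\,x_0$ with $e_1=\{x_0^{+},x_1^{+}\}$, $e_2=\{x_1^{-},x_2^{+}\}$, $e_3=\{x_2^{-},x_1^{-}\}$, $e_4=\{x_1^{+},x_0^{+}\}$, so that $e_1$ and $e_4$ are parallel edges with the same biorientation (the paper explicitly allows multiple edges). Every pass-through sum $W_P$ vanishes and the signed vertices $x_0^{+},x_1^{-},x_2^{-},x_1^{+}$ of Proposition \ref{bpath} are pairwise distinct, so $P$ satisfies (a) and (b) and is a b-path from $x_0^{+}$ to $x_0^{+}$. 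Yet $e_4$ by itself is a b-path from $x_0^{+}$ to $x_1^{+}$ inside $P-\{e_1\}$, so $e_1\in\ft(P;P-\{e_1\})$ and $\Rt(P)\neq P$; your spliced walk $x_0\,e_4\,x_1\,e_2\,x_2\,e_3\,x_1\,e_4\,x_0$ exists here even though $P$ is minimal in the paper's sense. To close the argument one must either read Definition \ref{11}(iv) as edge-set minimality (which would make your contradiction available but would break Proposition \ref{bpath}, on which both you and the paper rely), or add a hypothesis excluding configurations like the one above and then prove directly that no b-path of $P-\{e_m\}$ joins $x_{m-1}^{\alpha_{m-1}}$ to $x_m^{\beta_m}$. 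As written, the contradiction you invoke is not available.
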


\begin{proof}
For the graph $P_{(\alpha,\beta)}(x, y)$, we have $S_{<}(G_{\tau})=\emptyset$.
\end{proof}

\subsection{Transitive Closure -- Transitive Reduction}

In this section we study the relationship between transitive closure and transitive reduction.

\begin{proposition}\label{31a}
Let $G_{\tau}$ be a bidirected graph.  Then every transitive reduction of $G_\tau$ is a transitive reduction of $\Ft(G_{\tau})$.
\end{proposition}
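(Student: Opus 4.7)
The plan is to apply Proposition \ref{23}, which characterizes transitive reductions of a graph $K_\tau$ as minimal partial graphs $H_\tau$ satisfying $K_\tau \subseteq \Ft(H_\tau)$. So the task reduces to showing two things about $\Rt(G_\tau)$: (1) $\Ft(G_\tau) \subseteq \Ft(\Rt(G_\tau))$, and (2) no proper partial graph of $\Rt(G_\tau)$ has this property.

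First I would settle containment. By Proposition \ref{22}, $\Ft(\Rt(G_\tau)) = \Ft(G_\tau)$, so $\Ft(G_\tau) \subseteq \Ft(\Rt(G_\tau))$ is immediate. I should also note that $\Rt(G_\tau)$ is a partial graph of $\Ft(G_\tau)$, since it is a partial graph of $G_\tau$ and $G_\tau \subseteq \Ft(G_\tau)$ by Proposition \ref{ft2}(i); this is needed so that the notion of ``transitive reduction of $\Ft(G_\tau)$'' even applies to $\Rt(G_\tau)$.

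Then I would verify minimality by contradiction. Suppose $H_\tau$ is a proper partial graph of $\Rt(G_\tau)$ with $\Ft(G_\tau) \subseteq \Ft(H_\tau)$. Since $G_\tau \subseteq \Ft(G_\tau)$, this would give $G_\tau \subseteq \Ft(H_\tau)$, which contradicts the minimality of $\Rt(G_\tau)$ as a transitive reduction of $G_\tau$ (invoking Proposition \ref{23} in the reverse direction). Hence no such $H_\tau$ exists, and $\Rt(G_\tau)$ is minimal with respect to containing $\Ft(G_\tau)$ in its transitive closure.

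There is no real obstacle here; the argument is a short two-line deduction from Propositions \ref{22} and \ref{23}. The only point worth stating carefully is that the direction $\Ft(G_\tau) \not\subseteq \Ft(H_\tau)$ for proper $H_\tau \subsetneq \Rt(G_\tau)$ follows by witnessing an edge of $G_\tau$ (hence of $\Ft(G_\tau)$) that is missing from $\Ft(H_\tau)$.
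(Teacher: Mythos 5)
Your proof is correct, but it follows a genuinely different route from the paper's. The paper proves this proposition by invoking Proposition \ref{24}: it chooses a linear ordering of $E(\Ft(G_\tau))$ in which the edges of $\Ft(G_\tau)-E(G_\tau)$ come first and the edges of $\Rt(G_\tau)$ come last, observes that the first $m$ deletion steps strip $\Ft(G_\tau)$ back down to $G_\tau$, and then lets the remaining steps of the $S_<$ construction realize $\Rt(G_\tau)$. You instead verify the abstract characterization of Proposition \ref{23} directly: containment $\Ft(G_\tau)\subseteq\Ft(\Rt(G_\tau))$ is immediate from Proposition \ref{22}, and minimality follows because any proper partial graph $H_\tau\subsetneq\Rt(G_\tau)$ with $\Ft(G_\tau)\subseteq\Ft(H_\tau)$ would a fortiori satisfy $G_\tau\subseteq\Ft(H_\tau)$, contradicting the minimality of $\Rt(G_\tau)$ as a transitive reduction of $G_\tau$. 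Both arguments are sound and there is no circularity, since Propositions \ref{22} and \ref{23} precede \ref{31a}. Your version is shorter and avoids any reliance on the edge-ordering machinery; what the paper's version buys is an explicit deletion sequence witnessing $\Rt(G_\tau)$ as an output of the $S_<$ algorithm applied to $\Ft(G_\tau)$, which ties the statement to the constructive description of transitive reductions. Your remark that $\Rt(G_\tau)$ must be checked to be a partial graph of $\Ft(G_\tau)$ is a small point of care that the paper leaves implicit.
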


\begin{proof}
We apply Proposition \ref{24}.  Let $\Rt(G_\tau)$ be a transitive reduction of $G_\tau$.  Choose a linear ordering of $E(\Ft(G_{\tau})$ in which the edges of $\Ft(G_{\tau})-E(G_{\tau})$ are initial and the edges of $\Rt(G_\tau)$ are final.  By the definition of $\Ft$, the $m$ edges of $\Ft(G_{\tau})-E(G_{\tau})$ are in $S_{<}(\Ft(G_{\tau})-E(G_{\tau}))$ and $(\Ft(G_{\tau}))_m = \Ft(G_\tau)$.  The proposition follows.
\end{proof}

It may not be true that every transitive reduction of $\Ft(G_{\tau})$ is an $\Rt(G_\tau)$.  Let $H_\tau$ be a bidirected graph that has more than one transitive reduction, and let $G_\tau = \Rt(H_\tau)$.  Then $G_\tau = \Rt(G_\tau)$.   Since $H_\tau \subseteq \Ft(H_\tau) = \Ft(G_\tau)$, every transitive reduction of $H_\tau$ is a transitive reduction of $\Ft(G_\tau)$, but only one of those transitive reductions can be $G_\tau = \Rt(G_\tau)$.  That cannot happen if $G_\tau$ has no bicircuit.  We prove a lemma first.

\begin{lemma}\label{Ft no bicircuit} 
Let $G_{\tau}$ be a bidirected graph without a bicircuit or parallel edges.  Then $\Ft(G_{\tau})$ has no bicircuit.
\end{lemma}
\begin{proof}
Suppose $e = \{x^\alpha,y^\beta\} \in \Ft(G_\tau)-E(G_\tau)$.  That means there is a bipath $P_{(\alpha,\beta)}(x,y)$ in $G_\tau$.  Now suppose there is a bicircuit $C$ from $z^\gamma$ to $z^{-\gamma}$ in $G_\tau \cup \{e\}$, $C = C_1eC_2$, where we may assume $C_1$ ends at $x^{-\alpha}$ and $C_2$ begins at $y^{-\beta}$.  Then $C_1PC_2$ is a closed bichain from $z^\gamma$ to $z^{-\gamma}$ in $G_\tau$, so it contains a bicircuit, but that is absurd.  Therefore, $G_\tau \cup \{e\}$ contains no bicircuit.  
The proof follows by induction on the number of edges in $\Ft(G_\tau)-E(G_\tau)$.
\end{proof}

\begin{proposition}\label{31}
Let $G_{\tau}$ be a bidirected graph without a bicircuit or parallel edges.  Then $\Rt(\Ft(G_{\tau}))=\Rt(G_{\tau})$.  That is, the unique transitive reduction of $\Ft(G_{\tau})$ is the (unique) transitive reduction of $G_\tau$.
\end{proposition}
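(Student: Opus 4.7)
The plan is to chain together three results already proved in the paper. Since $G_\tau$ has no b-circuit, Lemma \ref{Ft no b-circuit} gives that $\Ft(G_\tau)$ also has no b-circuit. Hence Proposition \ref{Rt no b-circuits} applies to both $G_\tau$ and $\Ft(G_\tau)$, so each admits a \emph{unique} transitive reduction; write them as $\Rt(G_\tau)$ and $\Rt(\Ft(G_\tau))$.

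Next, I would apply Proposition \ref{31a}, which says that every transitive reduction of $G_\tau$ is also a transitive reduction of $\Ft(G_\tau)$. In particular, $\Rt(G_\tau)$ is a transitive reduction of $\Ft(G_\tau)$. By the uniqueness established in the previous paragraph, this transitive reduction must coincide with $\Rt(\Ft(G_\tau))$, yielding $\Rt(\Ft(G_\tau)) = \Rt(G_\tau)$.

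The proof is essentially a one-line synthesis; there is no real obstacle once Lemma \ref{Ft no b-circuit}, Proposition \ref{Rt no b-circuits}, and Proposition \ref{31a} are in hand. The only subtlety to be careful about is to confirm that Proposition \ref{31a} produces a bona fide transitive reduction of $\Ft(G_\tau)$ (not merely a partial graph whose transitive closure contains $G_\tau$), which is already the content of that proposition, and to note explicitly that the uniqueness statement of Proposition \ref{Rt no b-circuits} relies on the absence of b-circuits in the \emph{closure}, which is precisely what Lemma \ref{Ft no b-circuit} guarantees.
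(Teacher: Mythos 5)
Your proposal is correct and follows essentially the same route as the paper: invoke Lemma \ref{Ft no b-circuit} to see that $\Ft(G_\tau)$ has no b-circuit, apply Proposition \ref{Rt no b-circuits} for uniqueness, and note that $\Rt(G_\tau)$ is itself a transitive reduction of $\Ft(G_\tau)$ (the content of Proposition \ref{31a}, which the paper uses implicitly). Your explicit citation of Proposition \ref{31a} at that step is if anything slightly more careful than the paper's wording.
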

\begin{proof}
By Lemma \ref{Ft no bicircuit}, $\Ft(G_{\tau})$ has no bicircuit.  According to Proposition \ref{Rt no bicircuits}, $\Ft(G_{\tau})$ has a unique transitive reduction.  $\Rt(G_\tau)$ is such a transitive reduction.  Therefore, $\Rt(\Ft(G_{\tau}))=\Rt(G_{\tau})$.
\end{proof}

\begin{corollary}\label{32}
Let $G_{\tau}$ be a bidirected graph without a bicircuit or parallel edges. If $S_{<}(G_{\tau})=\emptyset$, then $\Rt(\Ft(G_{\tau}))=G_{\tau}$.
\end{corollary}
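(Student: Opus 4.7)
The plan is to observe that this is a direct consequence of chaining together two earlier results, so no new technical work is required. The hypotheses separate naturally: the absence of b-circuits feeds into Proposition~\ref{31}, while the condition $S_{<}(G_{\tau})=\emptyset$ feeds into Corollary~\ref{25}.

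First I would apply Proposition~\ref{31}: since $G_{\tau}$ has no b-circuit, $\Rt(\Ft(G_{\tau}))=\Rt(G_{\tau})$, and moreover this reduction is unique (by Lemma~\ref{Ft no b-circuit} together with Proposition~\ref{Rt no b-circuits}), so there is no ambiguity in the notation. Next I would apply Corollary~\ref{25}: the hypothesis $S_{<}(G_{\tau})=\emptyset$ gives $\Rt(G_{\tau})=G_{\tau}$. Concatenating the two equalities yields $\Rt(\Ft(G_{\tau}))=G_{\tau}$, which is what we want.

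There is essentially no obstacle here; the only thing worth being careful about is making clear that the linear ordering $<$ used to form $S_{<}(G_{\tau})$ plays no further role in the conclusion. This is fine because Corollary~\ref{25} is stated for an arbitrary ordering for which $S_{<}$ vanishes: once we know some ordering produces $S_{<}(G_{\tau})=\emptyset$, Proposition~\ref{24} tells us that $G_{\tau}-S_{<}(G_{\tau})=G_{\tau}$ is itself a transitive reduction, and by the uniqueness noted above it is the only one of $\Ft(G_{\tau})$. So the proof is a one-line chain of implications drawing on Proposition~\ref{31} and Corollary~\ref{25}.
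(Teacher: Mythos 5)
Your proof is correct and follows essentially the same route as the paper: both combine Corollary \ref{25} (giving $\Rt(G_{\tau})=G_{\tau}$ from $S_{<}(G_{\tau})=\emptyset$) with Proposition \ref{31} (giving $\Rt(\Ft(G_{\tau}))=\Rt(G_{\tau})$ from the absence of b-circuits). The extra remarks on uniqueness and the role of the ordering are harmless but not needed beyond what the cited results already provide.
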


\begin{proof}
If $S_{<}(G_{\tau})=\emptyset$, then according to Corollary \ref{25} we have $\Rt(G_{\tau})=G_{\tau}$.
Moreover, it follows from Proposition \ref{31} that $\Rt(\Ft(G_{\tau}))=\Rt(G_{\tau})$, from which the result follows.
\end{proof}

\begin{proposition}\label{33}
Let $G_{\tau}$ be a bidirected graph without a bicircuit or parallel edges.  Then $\Ft(\Rt(\Ft(G_{\tau})))=\Ft(G_{\tau})$.
\end{proposition}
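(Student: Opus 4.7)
The plan is to chain together Proposition \ref{31} and Proposition \ref{22}; no new construction should be required. First I would invoke Proposition \ref{31}, which applies since $G_{\tau}$ has no b-circuit, to replace the inner $\Rt(\Ft(G_{\tau}))$ by $\Rt(G_{\tau})$. This reduces the identity to showing $\Ft(\Rt(G_{\tau}))=\Ft(G_{\tau})$, which is exactly the statement of Proposition \ref{22}. Combining the two equalities gives the result.

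The only thing that needs even a passing check is that the hypothesis ``no b-circuit'' is genuinely used: it is needed so that Proposition \ref{31} is applicable, i.e., so that $\Rt(\Ft(G_{\tau}))$ is unambiguously the same transitive reduction as $\Rt(G_{\tau})$. Proposition \ref{22}, by contrast, holds for any bidirected graph and any choice of transitive reduction, so no extra hypothesis is needed at that step.

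I do not expect a serious obstacle here: the proof is two lines of rewriting once the earlier results are invoked in the correct order. The main point to be careful about is notational: $\Rt$ may not be uniquely determined in general, but Lemma \ref{Ft no b-circuit} together with Proposition \ref{Rt no b-circuits} guarantees uniqueness in both $G_{\tau}$ and $\Ft(G_{\tau})$ here, so the equation $\Rt(\Ft(G_{\tau}))=\Rt(G_{\tau})$ is unambiguous and Proposition \ref{22} can be applied directly to either side.
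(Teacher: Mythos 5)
Your proof is correct and is essentially identical to the paper's: it applies Proposition \ref{31} to rewrite $\Rt(\Ft(G_{\tau}))$ as $\Rt(G_{\tau})$ and then concludes with Proposition \ref{22}. Your remark about where the no-b-circuit hypothesis is used (only to make Proposition \ref{31} applicable and the reduction unambiguous) is also accurate.
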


\begin{proof}
By Proposition \ref{31}, 
$\Rt(\Ft(G_{\tau}))=\Rt(G_{\tau}) 
\Rightarrow \Ft(\Rt(\Ft(G_{\tau})))=\Ft(\Rt(G_{\tau}))$
$\Rightarrow \Ft(\Rt(\Ft(G_{\tau})))=\Ft(G_{\tau})$ by Proposition \ref{22}.
\end{proof}


\section{The Matroid of a Bidirected Graph}\label{matroid}

We indicate by $b(G_{\tau})$ the number of balanced connected components of $G_{\tau}$.

\begin{theorem}\label{34}{\rm\cite{zs}}
Given a signed graph $G_{\sigma}$, there is a matroid $M(G_{\sigma})$
associated to $G_{\sigma}$, such that a subset $F$ of the edge set
$E$ is a circuit of $M(G_{\sigma} )$ if, and only if, either 
\begin{enumerate}[{\rm Type (i)}]
\item $F$ is a positive cycle,  or
\item $F$ is the union of two negative cycles, having exactly one common vertex,  or
\item $F$ is the union of two vertex-disjoint negative cycles and an elementary chain which is internally disjoint from both cycles.
\end{enumerate}
 The rank function is $r(M(G_{\tau}))=|V|-b(G_{\tau})$.
\end{theorem}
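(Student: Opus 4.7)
The plan is to realize $M(G_\sigma)$ as the vector matroid of an explicit matrix, which yields the matroid axioms for free, and then to identify its circuits by combining the rank formula with a structural case analysis. Fix any biorientation $\tau$ of the underlying graph that induces $\sigma$ (each edge admits such a biorientation, since $\sigma(e) = -\alpha\beta$ per Definition \ref{6} can be realized by two choices of $(\alpha,\beta)$). To each edge $e = \{x^\alpha, y^\beta\}$ associate the column vector $v_e := \alpha\,e_x + \beta\,e_y \in \mathbb{R}^V$. Positive loops yield the zero vector (a matroid loop); every other edge yields a nonzero vector. I define $M(G_\sigma)$ to be the vector matroid of $\{v_e\}_{e\in E}$. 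Switching at a vertex $w$ negates the $w$-coordinate of every $v_e$, a linear isomorphism that preserves all dependences, so by Proposition \ref{ssw} the matroid is well defined on $G_\sigma$.

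I would then compute the rank function and show $r(F) = |V| - b(F)$, where $b(F)$ is the number of balanced connected components of $\langle F\rangle := (V,F)$. On a connected subgraph $H$ of $k$ vertices, a spanning tree contributes $k-1$ linearly independent vectors by induction on tree size. For a cycle $C$ in $H$, solving the per-vertex vanishing equations $\lambda_j\beta_j + \lambda_{j+1}\alpha_{j+1} = 0$ around $C$ yields a consistency condition at the starting vertex that reduces exactly to $\sigma(C) = \prod_e(-\alpha_e\beta_e) = +1$; thus the cycle edges are linearly dependent precisely when $C$ is positive, and in the negative case the partial sum evaluates to a nonzero multiple $\pm 2 e_z$ of a basis vector. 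Hence $H$ has rank $k-1$ when balanced and $k$ when unbalanced, and summing over components produces the claimed formula.

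For the circuits, a minimal dependent set $F$ satisfies $|F| = r(F)+1$ with $F-e$ independent for every $e\in F$. If $\langle F\rangle$ splits into two vertex-disjoint nonempty pieces $F_1, F_2$, then $r(F) = r(F_1)+r(F_2)$, forcing one $F_i$ to be dependent by itself and contradicting minimality; so every circuit is supported on a single connected subgraph on $k$ vertices. If that subgraph is balanced, then $|F| = k$ and $\langle F\rangle$ is unicyclic with a positive cycle, and minimality forces $F$ to equal that cycle (Type (i)). If $\langle F\rangle$ is unbalanced, then $|F| = k+1$ and $\langle F\rangle$ has cyclomatic number two, so it is a theta graph, a tight handcuff, or a loose handcuff. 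The theta case is ruled out by the sign-parity identity $\sigma(P_1\cup P_2)\,\sigma(P_1\cup P_3)\,\sigma(P_2\cup P_3) = \sigma(P_1)^2\sigma(P_2)^2\sigma(P_3)^2 = +1$ on its three internally disjoint paths: not all three of its $2$-path cycles can be negative, so the positive one would already be a smaller dependent set. The remaining two graph types yield exactly Types (ii) and (iii), with minimality forcing both cycles to be negative (else a positive cycle would again be a smaller dependent set). I expect the main obstacles to be the cycle-dependence computation that pins down the rank function and the theta sign-parity step; once these are in place, the three-type classification drops out of the graph-theoretic enumeration of connected graphs with exactly one excess edge.
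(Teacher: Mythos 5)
The paper does not actually prove this theorem: it is quoted from Zaslavsky's \emph{Signed graphs} \cite{zs}, so there is no internal argument to measure you against. Your plan is, in substance, the standard representation-theoretic proof of that cited result: realize $M(G_\sigma)$ as the column matroid of the bidirected incidence matrix, establish the rank formula $r(F)=|V|-b(F)$ via the fact that a cycle's columns are dependent exactly when the cycle is positive, and then read off the circuits from $|F|=r(F)+1$ together with a classification of connected graphs of cyclomatic number at most two. The key computations you identify (the cycle-dependence determinant and the theta-graph sign parity $\sigma(P_1\cup P_2)\sigma(P_1\cup P_3)\sigma(P_2\cup P_3)=+1$) are correct and are exactly the crux points. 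This route buys slightly more than the bare statement, since it also gives representability of $M(G_\sigma)$ over $\mathbb{R}$ and makes the rank formula quoted later in the paper immediate.

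Two steps are stated too quickly and should be filled in. First, ``cyclomatic number two, so it is a theta graph, a tight handcuff, or a loose handcuff'' is false without an extra observation: such a graph may carry pendant trees. You need the remark that in any minimal linear dependence the support has minimum degree at least $2$ (a vertex of degree $1$ in the support forces its unique incident coefficient to vanish); the same remark is what lets your spanning-tree-plus-one-edge argument conclude rank $k$ for an unbalanced connected piece, so make it explicit and use it in both places. Second, the theorem is an ``if and only if,'' and your write-up only argues that every circuit has one of the three forms; you must also verify that each set of Type (i)--(iii) is \emph{minimally} dependent, i.e., that deleting any edge from a handcuff with two negative cycles leaves an independent set (each resulting component is either a forest or connected, unbalanced, with exactly as many edges as vertices). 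Both follow routinely from your rank formula, but the statement is not proved until they are written down. A last cosmetic point: well-definedness of the matroid requires checking not only switching (Proposition \ref{ssw}) but also reversal of a single edge, which negates one column and therefore also preserves all dependences.
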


This matroid is now called the \emph{frame matroid} of $G_\sigma$.  See figure \ref{FIG2}, where we represent a positive (resp., negative) cycle by a quadrilateral (resp., triangle).

\setlength{\unitlength}{.7mm}

\begin{figure}[h]
\begin{center}
\begin{picture}(135,30)(0,-5)
\put(15,0){\circle*{2}}
\multiput(0,10)(30,0){2}{\circle*{2}}
\put(15,20){\circle*{2}}
\multiput(15,0)(15,10){2}{\line(-3,2){15}}
\multiput(0,10)(15,-10){2}{\line(3,2){15}}
\put(6,-8){{Type (i)}}
\put(40,0){\line(0,1){20}}
\put(40,0){\line(3,2){30}}
\put(40,20){\line(3,-2){30}}
\put(70,0){\line(0,1){20}}
\multiput(40,0)(15,10){3}{\circle*{2}}
\multiput(40,20)(30,-20){2}{\circle*{2}}
\put(45,-8){{Type (ii)}}
\put(80,0){\line(0,1){20}}
\put(80,0){\line(3,2){15}}
\put(80,20){\line(3,-2){15}}
\put(95,10){\line(1,0){15}}
\put(110,10){\line(3,-2){15}}
\put(110,10){\line(3,2){15}}
\put(125,0){\line(0,1){20}}
\multiput(80,0)(0,20){2}{\circle*{2}}
\multiput(95,10)(15,0){2}{\circle*{2}}
\multiput(125,0)(0,20){2}{\circle*{2}}
\put(91,-8){{Type (iii)}}
\end{picture}
\caption{}\label{FIG2}
\end{center}
\end{figure}
The matroid associated to the bidirected graph is the matroid associated to its signed graph (given by Definition \ref{6}).

\begin{definition}{\rm(\cite{k}; \cite[Def.\ 1.1 of \S3.1]{ka})}\label{35}
{\rm
A signed graph $G_{\sigma}$ is called \emph{quasibalanced} (\emph{m-balanced} in \cite{k,ka}) if it does not admit circuits of types (ii) and (iii).
We have the same definition for bidirected graphs.
}
\end{definition}

\begin{proposition}\label{36}
A connected signed graph $G_{\sigma}$ is quasibalanced if, and only if, for any two negative cycles $C$ and $\acute{C}$ we have 
$|V(C) \cap  V(\acute{C})|\geq 2$.
\end{proposition}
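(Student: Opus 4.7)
The plan is to prove the equivalence by handling each direction separately, since both reduce essentially to unpacking the definitions of quasibalance and matroid circuits of types (ii) and (iii).

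For the forward direction, I will assume $G_\sigma$ is connected and quasibalanced, and take any two negative cycles $C$ and $C'$. I want to show $|V(C)\cap V(C')|\geq 2$, so I argue by contradiction and consider two cases. If $|V(C)\cap V(C')|=1$, then $C\cup C'$ is precisely a circuit of type (ii), contradicting quasibalance. If $|V(C)\cap V(C')|=0$, then $C$ and $C'$ are vertex-disjoint; using connectedness of $G_\sigma$, I choose a chain in $G_\sigma$ from $V(C)$ to $V(C')$ and take a shortest such chain $L$, which is automatically elementary and internally disjoint from both $C$ and $C'$ (otherwise it could be shortened). Then $C\cup L\cup C'$ is a circuit of type (iii), again contradicting quasibalance.

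For the reverse direction, I assume that every pair of negative cycles in $G_\sigma$ shares at least two vertices, and I show by contrapositive that $G_\sigma$ admits no matroid circuits of type (ii) or (iii). A type (ii) circuit is, by Theorem \ref{34}, the union of two negative cycles meeting in exactly one vertex, which directly contradicts the hypothesis. A type (iii) circuit involves two vertex-disjoint negative cycles, again forcing $|V(C)\cap V(C')|=0<2$, contradicting the hypothesis.

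The only nontrivial step is the construction of the elementary chain $L$ in the forward direction's vertex-disjoint case, but connectedness of $G_\sigma$ together with the minimality choice makes this routine; no obstacles are anticipated in the argument.
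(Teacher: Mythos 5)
Your proof is correct and follows essentially the same route as the paper: sufficiency by directly matching the hypothesis against the definitions of type (ii) and (iii) circuits, and necessity by contradiction in the two cases $|V(C)\cap V(C')|=1$ (type (ii)) and $|V(C)\cap V(C')|=0$ (type (iii) via connectedness). Your only addition is to spell out that a shortest connecting chain is elementary and internally disjoint from both cycles, a detail the paper leaves implicit; this is a correct and welcome refinement rather than a divergence.
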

\begin{proof}
Sufficiency results from Definition \ref{35} and Theorem \ref{34}.

To prove necessity, suppose that $G_{\sigma}$  admits two negative cycles $C$ and $\acute{C}$.  
Suppose $|V(C) \cap  V(\acute{C})|= 0$.  Since $G_{\sigma}$ is connected, there exists a chain connecting a vertex of $C$ with a vertex of $\acute{C}$, therefore there exists a circuit of type (iii) which contains  $C$ and $\acute{C}$.
Suppose $|V(C) \cap  V(\acute{C})|= 1$.  Then $C \cup \acute{C}$ is a  circuit of type (ii).  Both cases are impossible; therefore $|V(C) \cap  V(\acute{C})| >  1$.
\end{proof}

\begin{proposition}\label{37}
If $G_{\tau}$ is a quasibalanced bidirected graph, then $\Rt(G_{\tau})$ is quasibalanced.
\end{proposition}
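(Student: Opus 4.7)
The plan is to observe that quasibalancedness is a hereditary property under taking partial graphs, and that $\Rt(G_\tau)$ is by construction a partial graph of $G_\tau$, so the conclusion is essentially immediate.

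First I would recall the definition: $G_\tau$ is quasibalanced when the matroid $M(G_\tau)$ of its associated signed graph has no circuit of type (ii) or type (iii) in the sense of Theorem \ref{34}. These two types are described purely by their own internal structure: type (ii) is a union of two negative cycles sharing exactly one vertex, and type (iii) is a union of two vertex-disjoint negative cycles joined by an internally disjoint elementary chain. Whether a given edge set $F$ realizes such a configuration depends only on the underlying subgraph on the edges of $F$ and on the signs of those edges (equivalently, on $\tau$ restricted to their half-edges).

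Next I would note that by Definition \ref{21}, $\Rt(G_\tau)$ is a partial graph of $G_\tau$, and its biorientation (and hence its signature $\sigma$) is inherited by restriction from $\tau$. Consequently, every cycle of $\Rt(G_\tau)$ is a cycle of $G_\tau$ with the same sign, and any union of cycles and chains forming a type (ii) or type (iii) configuration in $\Rt(G_\tau)$ is also such a configuration in $G_\tau$.

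Therefore, if $\Rt(G_\tau)$ were \emph{not} quasibalanced, it would contain an edge set realizing a circuit of type (ii) or type (iii); the same edge set would then realize a circuit of the same type in $G_\tau$, contradicting the hypothesis that $G_\tau$ is quasibalanced. I do not anticipate any real obstacle here; the only thing to check carefully is that the matroid-circuit characterization of Theorem \ref{34} is invariant under passage to subgraphs containing all edges of the circuit, which is immediate from the statement.
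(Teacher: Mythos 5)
Your proposal is correct and is essentially the paper's own argument: the paper's proof likewise observes that $\Rt(G_{\tau})$ is a partial graph of $G_{\tau}$ and that circuits of types (ii) and (iii) in the partial graph would also be circuits of those types in $G_{\tau}$. You simply spell out the hereditary nature of the configurations in more detail than the paper does.
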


\begin{proof}
As $M(G_{\tau})$ is without circuits of type (ii) and (iii), and since $\Rt(G_{\tau})$ is a partial graph of $G_{\tau}$, the result follows.
\end{proof}

\begin{proposition}\label{38}
Let $G_{\tau}$ be a bidirected graph such that $\Rt(G_{\tau})$ is quasibalanced.  
Let $<$ be a linear ordering of $E$.  
If $G_{\tau}$ is quasibalanced, then for every edge $e$ belonging to the set $S_{<}(G_{\tau})$, $e$ is not in the transitive closure of any cyclic bipath in $G_{\tau}$.
\end{proposition}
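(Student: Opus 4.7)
The plan is to proceed by contradiction. Suppose some edge $e = \{x^\alpha, y^\beta\} \in S_<(G_\tau)$ is the edge implied by transitive closure of a cyclic b-path $P = P_{(\alpha,\beta)}(x,y)$ in $G_\tau$, and let $C$ be the unique purely cyclic negative sub-b-path of $P$, with special vertex $v$ at which $W_C(v) = \pm 2$. I will exhibit inside $P \cup \{e\}$ (hence inside $G_\tau$) a matroid circuit of Type (ii) or (iii) from Theorem \ref{34}, contradicting the quasibalancedness of $G_\tau$.

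By Lemma \ref{cbpath}, $P$ has one of the three structures of Figure \ref{FIG5}, and in each I will build a second negative cycle $C'$ from $e$ together with part of $P$. If $P$ is purely cyclic (so $P = C$ and $x = y = v$), then $e$ is a loop at $v$; the relation $W_C(v) = \alpha + \beta = \pm 2$ forces $\alpha = \beta$, whence $\sigma(e) = -\alpha\beta = -1$ by Corollary \ref{17}, so $e$ is a negative loop and $C$ together with $\{e\}$ are two negative cycles meeting only at $v$, a Type (ii) matroid circuit. If $P$ consists of $C$ together with a simple tail path $Q$ from $v$ to $y$ (so, up to reversing $P$, $x = v$), set $C' := Q \cup \{e\}$; the splitting $\sigma(P) = \sigma(C)\sigma(Q) = -\sigma(Q)$ combined with Proposition \ref{bwalksign} gives $\sigma(Q) = \alpha\beta$, hence $\sigma(C') = \sigma(Q)\sigma(e) = -1$, and since the tail meets $C$ only at $v$ we have $V(C) \cap V(C') = \{v\}$, again Type (ii). Finally, if $P$ traverses $C$ via a tree with branch vertex $w \neq v$, decomposed as $T_x$ (from $x$ to $w$), $T_w$ (from $w$ to $v$), and $T_y$ (from $w$ to $y$), then set $C' := T_x \cup T_y \cup \{e\}$; the analogous sign calculation, in which the double traversal of $T_w$ in $P$ causes $\sigma(T_w)^2 = 1$ to cancel, yields $\sigma(C') = -1$. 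Here the tree meets $C$ only at $v \notin V(C')$, so $C$ and $C'$ are vertex-disjoint, while $T_w$ is an elementary chain between them internally disjoint from both, a Type (iii) matroid circuit.

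In every case all edges of the exhibited circuit lie in $P \cup \{e\} \subseteq G_\tau$, so $G_\tau$ contains a Type (ii) or Type (iii) matroid circuit, contradicting the hypothesis that $G_\tau$ is quasibalanced. I expect the main obstacle to be the sign verification for $C'$ in the tree case: keeping track of the double traversal of $T_w$ and the fact that its sign contribution to $\sigma(P)$ squares out. This follows from the multiplicativity of $\sigma$ on chains (Definition \ref{10}) together with Proposition \ref{bwalksign} applied to $P$, but deserves careful accounting. The hypothesis that $\Rt(G_\tau)$ is quasibalanced is automatic from Proposition \ref{37} once $G_\tau$ is assumed quasibalanced and is not actually invoked in the argument.
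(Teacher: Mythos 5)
Your proof is correct and follows essentially the same route as the paper's: the heart of both arguments is that $\sigma(e)=\sigma(P)=-\alpha\beta$, so the closed walk $Pe$ is positive, and since the doubly-traversed portion of $P$ squares out of the sign, the second cycle formed by $e$ together with the acyclic part of $P$ must be negative, making $P\cup\{e\}$ a Type (ii) or (iii) circuit of $M(G_\sigma)$ and contradicting quasibalance --- the paper asserts this in one line where you verify it case by case via Lemma \ref{cbpath}. The only quibble is that your case list omits the configuration of Figure \ref{FIG5} in which $x,y\neq v$ but their two tails meet only at $v$ (so there is no branch vertex $w\neq v$ and neither endpoint lies on $C$); your third-case computation with $T_w$ empty handles it verbatim and produces a Type (ii) rather than Type (iii) circuit, so nothing breaks.
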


\begin{proof}
Let $G_\sigma$ be the signed graph corresponding to $G_\tau$.

Assume that there is an edge $e=\{x^{\alpha},y^{\beta}\}$, belonging to the set $S_{<}(G_{\tau})$, which is in the transitive closure of the cyclic bipath $P = P_{(\alpha,\beta)}(x, y)$, containing the negative cycle $C$.  This implies that $P \cup \{e\}$ is a circuit of the type (ii) or (iii) of $G_{\sigma}$, if the cycle $\acute C$ of $P \cup \{e\}$ that contains $e$ is negative.  By Proposition \ref{bwalksign} the sign of $P$ is $-\alpha\beta$, which is also the sign of $e$.  Therefore, the sign of the closed chain $Pe$ is $+$.  This sign equals $\sigma(C)\sigma(\acute C)$, so $\sigma(\acute C) = \sigma(C) = -$.  
Thus $G_{\tau}$ is not quasibalanced, which is absurd.
\end{proof}

We do not have a sufficient condition for quasibalance.  The converse of Proposition \ref{38} is false.  
Consider $G_\tau$ with $V = \{1,2,3,4,5,6,7\}$ and edges
$$e=1^-2^+,\ 2^+3^+,\ 3^+4^+,\ 4^+5^+,\ 1^-5^+,\ 5^-6^+,\ 6^-7^+,\ 5^+7^+,\ 7^-2^+,$$
linearly ordered in that order.  We claim that $e$ is redundant using the path $P: 15672$, and that no other edge is redundant.   There is no matroid circuit of type (ii) or (iii) in $G_\tau-e$.  But $G_\tau-7^-2^+$ is a matroid circuit of type (ii).  Therefore, $G_\tau$ is not quasibalanced, but $\Rt(G_\tau) = G_\tau - e$ is quasibalanced.
However, $e$ is not in the transitive closure of any cyclic bipath. $P$ and $e$ are the only bipaths from $1$ to $2$.

Let $\overline{F}$ denote the closure of $F$ in a matroid.

\begin{lemma}\label{39}
Let $G_\tau$ be a bidirected graph with edge set $E$.  
If $e \in E - \Rt(E)$, then $e$ belongs to the closure $\overline{\Rt(E)}$ in $M(G_{\tau})$.
If $e \in E(\Ft(G_{\tau})) - E$, then $e$ belongs to the closure $\overline{E}$ in $M(\Ft(G_{\tau}))$.
\end{lemma}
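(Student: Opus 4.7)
The plan is to reduce both parts of the lemma to a single assertion about matroid rank. In any matroid $M$ with ground set $E$ and subset $F\subseteq E$, one has $e\in\overline{F}$ if and only if $r(F\cup\{e\}) = r(F)$; equivalently, if $r(F)=r(E)$, then every $e\in E-F$ lies in $\overline{F}$. So the two statements of the lemma follow once I show
\[
r(M(\Rt(G_\tau))) \;=\; r(M(G_\tau)) \;=\; r(M(\Ft(G_\tau))),
\]
applied respectively to $F=\Rt(E)\subseteq E$ in $M(G_\tau)$, and to $F=E\subseteq E(\Ft(G_\tau))$ in $M(\Ft(G_\tau))$.

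To prove this chain of equalities I would use the rank formula $r(M(G_\tau))=|V|-b(G_\tau)$ cited just after Theorem~\ref{34}. The vertex set is unchanged by either operator, so it suffices to prove $b(\Rt(G_\tau))=b(G_\tau)=b(\Ft(G_\tau))$, where $b$ counts balanced connected components. Preservation of the number of connected components is Proposition~\ref{26} for $\Rt$, and the same argument (a b-path, being a single chain, lies inside one connected component, and $\Ft$ only adds edges within components) works for $\Ft$. For the balance count I would apply the biconditional Theorem~\ref{29}(i) to each component of $G_\tau$ separately in the $\Rt$ case, and Proposition~\ref{19} together with its easy converse (since $G_\tau\subseteq\Ft(G_\tau)$, any cycle of a component of $G_\tau$ is also a cycle of the corresponding component of $\Ft(G_\tau)$, so balance transfers back) in the $\Ft$ case. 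This gives the two rank equalities and completes the proof.

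The principal (though routine) subtlety is the componentwise application of those global statements. I would justify it by noting that a b-path, being a chain, is contained in a single connected component of $G_\tau$; consequently both operators act component-by-component, i.e.\ the restriction of $\Rt(G_\tau)$ (resp.\ $\Ft(G_\tau)$) to the edge set of a component $H$ of $G_\tau$ equals $\Rt(H)$ (resp.\ $\Ft(H)$). With this observation in hand, Theorem~\ref{29}(i) and Proposition~\ref{19} give the balance equivalence on each component individually, so summing gives $b(\Rt(G_\tau))=b(G_\tau)=b(\Ft(G_\tau))$, and the rank equalities, and hence the lemma, follow.
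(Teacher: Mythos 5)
Your proof is correct, but it runs in essentially the opposite logical direction from the paper's. The paper argues locally: if $e=\{x^\alpha,y^\beta\}\in E-\Rt(E)$, the b-path $P$ in $\Rt(G_\tau)$ that induces $e$ has $\sigma(P)=\sigma(e)$ (Corollary \ref{17}), so $P\cup\{e\}$ is a matroid circuit of type (i), (ii) or (iii) from Theorem \ref{34}; this exhibits an explicit circuit through $e$ whose other edges lie in $\Rt(E)$, giving $e\in\overline{\Rt(E)}$ at once, and the second claim follows from the first by idempotence of matroid closure. You instead prove the global rank identity $r(M(\Rt(G_\tau)))=r(M(G_\tau))=r(M(\Ft(G_\tau)))$ and deduce both claims from ``a spanning subset closes to everything.'' Be aware that this identity is literally Theorem \ref{42}, which the paper proves \emph{from} Lemma \ref{39}; you avoid circularity only because you supply an independent proof via $r=|V|-b$, component preservation (Proposition \ref{26} and its analogue for $\Ft$), and the componentwise application of Proposition \ref{19} and Theorem \ref{29}(i). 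That route is legitimate and arguably cleaner, and it makes Theorem \ref{42} a byproduct; but it leans on two things the paper does not spell out --- the rank formula for arbitrary edge subsets (the paper states it only for the full matroid, though the general form is standard from \cite{zs}) and the fact that $\Ft$ and $\Rt$ commute with restriction to connected components, which you do justify. What it loses relative to the paper's argument is the explicit circuit through $e$, which is the more constructive piece of information and is closer in spirit to how the lemma gets used for connectivity in Theorem \ref{41}.
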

\begin{proof}
Let $P$ be a bipath in $\Rt(G_{\tau})$ which induces $e \in E$. Then $P \cup \{e\}$ is a matroid circuit of type (i), (ii) or (iii). Thus, $e \in \overline{\Rt(E)}$ in $M(G_{\tau})$.

The second statement follows from the first because in $M(\Ft(G_{\tau}))$, $\overline{E}$ is the closure of $\overline{\Rt(E)}$, which equals $\overline{\Rt(E)}$.
\end{proof}

\begin{theorem}\label{42}
Let $G_{\tau}$ be a bidirected graph. Then 
$$r(M(\Ft(G_{\tau})))= r(M(G_{\tau}))= r(M(\Rt(G_{\tau}))).$$
\end{theorem}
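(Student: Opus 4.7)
The plan is to reduce the rank equalities to a count of balanced connected components, using the formula $r(M(G_\tau))=|V|-b(G_\tau)$ stated just after Theorem \ref{34}. Since the vertex set $V$ is preserved by both $\Ft$ and $\Rt$ by their very definitions, it suffices to show that $b(G_\tau)=b(\Ft(G_\tau))=b(\Rt(G_\tau))$.

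First I would verify that $\Ft$ and $\Rt$ preserve the partition of $V$ into connected components. For $\Ft$: every edge $\{x^\alpha,y^\beta\}$ added to the transitive closure is induced by a b-path from $x$ to $y$ in $G_\tau$, so $x$ and $y$ already lie in the same component of $G_\tau$; hence no components are merged, and none can split since $E\subseteq \Ft(E)$. For $\Rt$: this is Proposition \ref{26} (applied componentwise, or directly from the argument that $\Ft$ does not alter components). Thus the three graphs have the same connected components on the same vertex sets.

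Next I would check that corresponding components are simultaneously balanced. Since $\Ft$ and $\Rt$ act componentwise (a b-path lies entirely within one component), it is enough to show for a connected bidirected graph $H_\tau$ that $H_\tau$ is balanced iff $\Ft(H_\tau)$ is balanced iff $\Rt(H_\tau)$ is balanced. The forward direction for $\Ft$ is Proposition \ref{19}; the reverse follows because $H_\tau$ is a partial graph of $\Ft(H_\tau)$ (Proposition \ref{ft2}(i)) and any partial graph of a balanced signed graph is balanced. The equivalence for $\Rt$ is exactly Theorem \ref{29}(i).

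Combining these observations, $b(G_\tau)=b(\Ft(G_\tau))=b(\Rt(G_\tau))$, and since all three graphs share the vertex set $V$, the rank formula gives $r(M(\Ft(G_\tau)))=r(M(G_\tau))=r(M(\Rt(G_\tau)))$. I do not anticipate a substantive obstacle here; the only subtlety is being careful that the previously cited results (Propositions \ref{19}, \ref{26}, and Theorem \ref{29}(i)) apply componentwise, which is immediate from the fact that b-paths, and therefore the operators $\Ft$ and $\Rt$, respect connected components.
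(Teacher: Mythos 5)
Your proof is correct, but it takes a genuinely different route from the paper's. The paper argues through the matroid closure: by Lemma \ref{39}, every edge of $\Ft(G_{\tau})$ outside $E$ lies in $\overline{E}$ in $M(\Ft(G_{\tau}))$, so adjoining those edges cannot raise the rank, which gives $r(M(\Ft(G_{\tau})))=r(M(G_{\tau}))$; the other equality then follows by applying the same observation to $\Rt(G_{\tau})$ together with $\Ft(\Rt(G_{\tau}))=\Ft(G_{\tau})$ (Proposition \ref{22}). You instead invoke the explicit rank formula $r(M(G_{\tau}))=|V|-b(G_{\tau})$ and reduce everything to showing that $\Ft$ and $\Rt$ preserve the connected components and the balance of each component, which you correctly source from Propositions \ref{19}, \ref{26}, \ref{ft2}(i) and Theorem \ref{29}(i), noting that a partial graph of a balanced graph is balanced for the converse direction. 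Both arguments are sound. The paper's version is shorter once Lemma \ref{39} is in hand and makes no use of the rank formula, only of the fact that elements in the closure of a set do not increase its rank; yours avoids Lemma \ref{39} (and hence the circuit classification of Theorem \ref{34}) entirely, at the cost of relying on the formula $r=|V|-b$ imported from \cite{zs} and of checking the componentwise behaviour of the two operators, which, as you say, is immediate since b-paths stay within a component.
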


\begin{proof}
For $r(M(\Ft(G_{\tau})))= r(M(G_{\tau}))$, it is enough to use Lemma \ref {39}.

For $r(M(\Rt(G_{\tau})))= r(M(\Ft(G_{\tau})))$, it is enough to cite Proposition \ref{22} and replace $G_\tau$ in the previous case by $\Rt(G_\tau)$.
\end{proof}

The definitions of a connected matroid in \cite{o} apply to the matroids of signed graphs.  In particular:

\begin{definition}\label{40}
{\rm
Let $G_{\sigma} = (V, E; \sigma)$ be a signed graph. The matroid $M(G_{\sigma})$ is \emph{connected} if each pair of distinct edges  $e$ and \emph{\'e} from $G_{\sigma}$, is contained in a circuit $C$ of M($G_{\sigma}$).
}
\end{definition}

\begin{theorem}\label{41}
Let $G_{\tau}=(V,E; \tau)$ be a bidirected graph and let $\Rt(G_{\tau})$ be any transitive reduction of $G_{\tau}$. If $M(\Rt(G_{\tau}))$ is connected, then $M(G_{\tau})$ is connected.
\end{theorem}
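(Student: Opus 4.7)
The plan is to invoke the standard characterization of matroid connectedness: the relation ``lying together in a matroid circuit'' is (after adjoining the diagonal) an equivalence relation on the ground set, and $M(G_\tau)$ is connected if and only if this relation has a single class. I will show that every edge of $G_\tau$ lies in the same class as some edge of $\Rt(E)$, and the hypothesis that $M(\Rt(G_\tau))$ is connected will then finish the proof by transitivity.

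Since $\Rt(G_\tau)$ is a partial graph of $G_\tau$, the classification in Theorem \ref{34} makes plain that every circuit of $M(\Rt(G_\tau))$ is also a circuit of $M(G_\tau)$, simply because a positive cycle (or a union of two negative cycles joined as in type (ii) or (iii)) in $\Rt(G_\tau)$ remains such a configuration in $G_\tau$. Consequently, the hypothesis gives immediately that any two edges of $\Rt(E)$ lie in a common circuit of $M(G_\tau)$. To handle an edge $f = \{x^\alpha, y^\beta\} \in E - \Rt(E)$, I will use Proposition \ref{20a} together with Definition \ref{21}: the equality $\ft(G_\tau; \Rt(G_\tau)) = G_\tau$ means that $f$ is induced by some b-path $P = P_{(\alpha,\beta)}(x,y)$ in $\Rt(G_\tau)$. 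Following the argument of Lemma \ref{39}, the set $E(P) \cup \{f\}$ is a matroid circuit of $M(G_\tau)$ of type (i), (ii), or (iii), and it contains $f$ together with at least one edge of $\Rt(E)$, because $P$ has at least one edge (condition $k \geq 1$ in Definition \ref{11}) and $E(P) \subseteq \Rt(E)$. Hence $f$ is in the same component of $M(G_\tau)$ as some edge of $\Rt(E)$.

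Combining the two observations with the transitivity of the component relation shows that every edge of $E$ lies in a single component of $M(G_\tau)$, which is the conclusion. The point most in need of care is confirming that $E(P) \cup \{f\}$ genuinely is a matroid circuit rather than a mere closed b-walk; this follows from Lemma \ref{39} supported by Corollary \ref{17}, which guarantees that the cycle formed from $f$ and a non-cyclic $P$ is positive (type (i)), and that the configuration formed from $f$ and the negative cycle of a cyclic $P$ is of type (ii) or (iii). No case analysis beyond that recorded in Lemma \ref{39} is needed, so the argument is essentially immediate from the results already established in the paper.
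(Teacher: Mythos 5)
Your proof is correct and reaches the conclusion by a genuinely different, more self-contained route than the paper. The paper's proof goes through Theorem \ref{42}: it observes that $\overline{E(\Rt(G_{\tau}))}=\overline{E(G_{\tau})}$ in $M(\Ft(G_{\tau}))$ and then appeals to ``standard matroid theory'' to conclude that a connected restriction whose closure is the whole ground set forces connectedness of the whole matroid. You instead unwind that black box: you use the circuit characterization of connectedness (the equivalence relation of lying in a common circuit), note that circuits of the restriction $M(\Rt(G_{\tau}))$ remain circuits of $M(G_{\tau})$, and attach each edge $f\in E-\Rt(E)$ to the class of $\Rt(E)$ via the explicit circuit $E(P)\cup\{f\}$ supplied by Lemma \ref{39}. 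This buys transparency --- one sees exactly where the hypothesis is used and why the b-path structure matters --- at the cost of re-deriving a standard fact. One caution, which applies equally to the paper's version and to Lemma \ref{39} itself: if $f$ is a positive loop at $x$ induced by a positive purely cyclic b-path (a positive b-circuit) in $\Rt(G_{\tau})$, then $E(P)\cup\{f\}$ is \emph{not} a matroid circuit, since $\{f\}$ alone is already a circuit of type (i); in that degenerate case $f$ lies in no circuit with any other element and $M(G_{\tau})$ is in fact disconnected. Your parenthetical claim that the cycle formed from $f$ and a non-cyclic $P$ is always of type (i) glosses over this, but since the paper's own proof (and Lemma \ref{39}) makes the same silent assumption, your argument is faithful to the intended scope of the theorem.
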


\begin{proof}
Theorem \ref{42} implies that $\overline{E(\Rt(G_{\tau}))} = \overline{E(G_{\tau})} = E(\Ft(G_{\tau}))$ in $M(\Ft(G_{\tau}))$.  It follows by standard matroid theory, since $M(\Rt(G_{\tau}))$ is connected, that $M(G_{\tau})$ and $M(\Ft(G_{\tau}))$ are connected.
\end{proof}

We note that the converse is false.  For example, let $P_{(\alpha, \beta)}(x, y)$ be a bipath of length not less than 2, whose graph is an elementary chain, and let $e$ be the edge $\{x^{\alpha}, y^{\beta}\}$.  Let $G_\tau = P_{(\alpha, \beta)}(x, y) \cup \{e\}$.  Then $P_{(\alpha, \beta)}(x, y) = \Rt(G_\tau)$, but $M(P_{(\alpha, \beta)}(x, y))$ is disconnected while $M(G_\tau)$ is connected (since the corresponding signed graph is a positive cycle).

\bigskip\bigskip

{\small
{\em Authors' addresses}: \\
{\em Ouahiba Bessouf}, Facult\'e de Math\'ematiques, USTHB BP 32 El Alia, Bab-Ezzouar 16111, Alger, Alg\'erie, 
 e-mail: \texttt{obessouf@\allowbreak yahoo.fr}\\
{\em Abdelkader Khelladi}, Facult\'e de Math\'ematiques, USTHB BP 32 El Alia, Bab-Ezzouar 16111, Alger, Alg\'erie, 
 e-mail: \texttt{kader{\_}khelladi@\allowbreak yahoo.fr} \\
{\em Thomas Zaslavsky}, Department of Mathematical Sciences, Binghamton University, Binghamton, NY 13902-6000, U.S.A., 
 e-mail: \texttt{zaslav@\allowbreak math.binghamton.edu}.
}

\end{document}